\newtheorem{thm}{Theorem}[section]
\newtheorem{proposition}[thm]{Proposition}
\newtheorem{theorem}[thm]{Theorem}
\newtheorem{remark}[thm]{Remark}
\begin{document}
\title[]{A scattering approach to a surface with hyperbolic cusp}
\author{Nikolaos Roidos}
\address{Institut f\"ur Analysis, Leibniz Universit\"at Hannover, Welfengarten 1, 30167 Hannover, Germany}
\email{roidos@math.uni-hannover.de}

\begin{abstract}
Let $X$ be a two-dimensional smooth manifold with boundary $S^{1}$ and $Y=[1,\infty)\times S^{1}$. We consider a family of complete surfaces arising by endowing $X\cup_{S^{1}}Y$ with a parameter dependent Riemannian metric, such that the restriction of the metric to $Y$ converges to the hyperbolic metric as a limit with respect to the parameter. We describe the associated spectral and scattering theory of the Laplacian for such a surface. We further show that on $Y$ the zero $S^{1}$-Fourier coefficient of the generalized eigenfunction of this Laplacian, as a family with respect to the parameter, approximates in a certain sense, for large values of the spectral parameter, the zero $S^{1}$-Fourier coefficient of the generalized eigenfunction of the Laplacian for the case of a surface with hyperbolic cusp.
\end{abstract}

\subjclass[2010]{58J50, 35P25}
\date{\today}

\date{\today}

\maketitle

\section{Introduction}

Consider a two-dimensional smooth manifold consisting of a compact part $X$ which is glued together with a non-compact end of the form $Y=[1,\infty)\times S^{1}$. Endow $X\cup_{S^1}Y$ with a Riemannian metric such that its restriction to $Y$ admits the warped product structure $dx^{2}+f^{2}(x)dy^{2}$, $(x,y)\in[1,\infty)\times S^{1}$, where $f$ is an appropriate function. In \cite{HRS}, the case of $f(x)=x^{-a}$, $a\in(0,\infty)$, has been studied (generalized cusp) as an interpolation between the case of $f(x)=1$ (cylindrical end) and of $f(x)=e^{-x}$ (hyperbolic cusp). Among other results, a meromorphic continuation of the resolvent of the Laplacian to the whole complex plane was obtained, a complete description of the generalized eigenfunctions of the Laplacian was given and the properties of the scattering matrix were shown. The whole theory coincides with the flat-cylindrical case when the parameter $a$ tends to zero. However, the same does not happen in the hyperbolic case when $a$ tends to infinity. In the sequel, we follow a similar consideration as in \cite{HRS}, but by using a different parametric family of metrics in order to achieve a more realistic scattering approach to the case of a surface with hyperbolic cusp. 

The family of metrics that we consider here is obtained by choosing $f(x)=(1+\frac{x}{a})^{-a}$, $a\in(0,\infty)$. Although for this choice of $f$ a part of the theory can be recovered by \cite{HRS} by changing variables $t=1+\frac{x}{a}$ and $h=a^{-1}y$, as a first step we do construct the generalized eigenfunction in detail in order to emphasize the sets of poles and the concrete formulas that appear. Furthermore, we show meromorphic continuation of the resolvent by following a different method compared to \cite{HRS}, that is based on the ideas in \cite{St}.

In our consideration, as the parameter $a$ tends to infinity we approach a surface with hyperbolic cusp from scattering point of view in the following sense: the zero coefficient of the Fourier expansion over $S^{1}$ of the generalized eigenfunction of the Laplacian induced by the above family of metrics (excluding possibly the scattering matrix part) converges uniformly on compact sets of $Y$ to the zero coefficient of the Fourier expansion over $S^{1}$ of the generalized eigenfunction of the hyperbolic Laplacian for large values of the spectral parameter.

The main property of the above metric is that after Hodge decomposing the $L^2$-space on the cusp, the eigenvalue equation of the Laplacian can be solved explicitly on the subspace of the {\em harmonic components} (i.e. the subspace that corresponds to the zero coefficient of the Fourier expansion over $S^{1}$). Therefore, the continuous part of the spectral theorem for the Dirichlet Laplacian on the cusp can be explicitly stated and a meromorphic continuation of the resolvent to the logarithmic cover of the complex plane $\mathbb{C}$ can be achieved. Then, returning to the original surface with cusp, standard gluing techniques (see e.g. \cite{Mu}) provide us the meromorphic continuation of the full resolvent of the Laplacian. The last in combination with the explicit geometric structure that we have on the cusp, leads to the construction of the generalized eigenfunction of the Laplacian as well as its explicit asymptotic expansion on the cusp itself and the definition of the scattering matrix. In addition, we obtain the functional equation of the scattering matrix. Finally, by using the above described machinery in combination with the properties of Bessel functions we proceed to the main approximation result.

\section{Spectral theory on the cusp}

Consider a two-dimensional smooth manifold $M=X\cup_{S^{1}} Y$, where $X$ is a compact two-dimensional smooth manifold with boundary $S^{1}=\mathbb{R}/\mathbb{Z}$ and $Y=[1,\infty) \times S^{1}$. Assume that $M$ is endowed with a Riemannian metric $g$ such that when it is restricted to $Y$ admits the warped product form
\begin{gather}\label{metric}
g|_{Y}=dx^{2}+(1+\frac{x}{a})^{-2a}dy^{2},
\end{gather}
where $(x,y)\in [1,\infty) \times S^{1}$ and $a\in(0,\infty)$ is a fixed parameter. The dependence of $g|_{X}$ by $a$ can be arbitrary, e.g. we can assume that $g|_{X}$ is constant over $a$ outside a collar part of the boundary. Let $\mathbb{M}=(M,g)$, $\mathbb{X}=(X,g|_{X})$ and $\mathbb{Y}=(Y,g|_{Y})$.

The Laplace operator induced by $g|_{Y}$ on $Y$, acting on the space $C_{0}^{\infty}(Y\backslash\partial Y)$ of smooth compactly supported functions on $Y$ with support away of the boundary $\{1\}\times S^{1}$, is given by
\begin{gather}\label{Delta}
\Delta_{\mathbb{Y}}=\partial_{x}^{2}-\frac{1}{1+\frac{x}{a}}\partial_{x}+(1+\frac{x}{a})^{2a}\partial_{y}^{2}.
\end{gather}
Let $L^{2}(\mathbb{Y})$ be the space of all functions on $\mathbb{Y}$ that are square integrable with respect to the Riemannian measure 
$$
d\mu_{g|_{Y}}=(1+\frac{x}{a})^{-a}dxdy
$$ 
induced by $g|_{Y}$. By imposing Dirichlet boundary condition at $\{1\}\times S^{1}$, let $\underline{\Delta}_{\mathbb{Y}}$ be the corresponding self-adjoint extension of $\Delta_{\mathbb{Y}}$ in $L^{2}(\mathbb{Y})$ (which is also the Friedrichs extension).

A function $u$ on $Y$ that belongs to $L^2(S^{1})$ for each $x\in[1,\infty)$ admits a Fourier expansion over $S^{1}$, namely
\begin{gather}\label{exp}
u(x,y)=\sum_{n\in\mathbb{Z}}u_{n}(x)e^{2\pi n iy}.
\end{gather}
Hence, the eigenvalue equation of $\Delta_{\mathbb{Y}}$, i.e.
\begin{gather}\label{ee}
(\Delta_{\mathbb{Y}}+\lambda)u=0,
\end{gather}
by separation of variables is equivalent to 
\begin{gather}\label{ee2}
u''_{n}(x)-\frac{1}{1+\frac{x}{a}}u'_{n}(x)+(\lambda-4\pi^{2}n^{2}(1+\frac{x}{a})^{2a})u_{n}(x)=0, \quad n\in\mathbb{Z}.
\end{gather}
According to (\ref{exp}), or equivalently after Hodge decomposing $L^{2}(S^{1})$, the space $L^{2}(\mathbb{Y})$ can be decomposed as follows
\begin{gather}\label{hd}
 L^{2}(\mathbb{Y})= L_{\mathcal{H}}^{2}(\mathbb{Y})\oplus (L_{\mathcal{H}}^{2}(\mathbb{Y}))^{\perp},
\end{gather}
where $L_{\mathcal{H}}^{2}(\mathbb{Y})\in \mathrm{Ker}(\partial_{y}^{2})$ and $(L_{\mathcal{H}}^{2}(\mathbb{Y}))^{\perp}$ is the orthogonal complement of $L_{\mathcal{H}}^{2}(\mathbb{Y})$ with respect to the $S^{1}$-inner product. More precisely we have that
\begin{gather}
 L^{2}(\mathbb{Y})=\overline{\bigoplus_{n\in\mathbb{Z}}L^{2}([1,\infty),(1+\frac{x}{a})^{-a}dx)\otimes e^{2\pi n iy}},
\end{gather}
\begin{gather}
 L_{\mathcal{H}}^{2}(\mathbb{Y})=L^{2}([1,\infty),(1+\frac{x}{a})^{-a}dx)\otimes 1
\end{gather}
and
\begin{gather}
 (L_{\mathcal{H}}^{2}(\mathbb{Y}))^{\perp}=\overline{\bigoplus_{n\in\mathbb{Z}\backslash\{0\}}L^{2}([1,\infty),(1+\frac{x}{a})^{-a}dx)\otimes e^{2\pi n iy}}.
\end{gather}

\begin{theorem}\label{ds}
The Laplacian $\underline{\Delta}_{\mathbb{Y}}$ restricted to the space $(L_{\mathcal{H}}^{2}(\mathbb{Y}))^{\perp}$ has discrete spectrum.
\end{theorem}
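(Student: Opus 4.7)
The plan is to exploit the Fourier decomposition over $S^{1}$ in order to reduce the claim to a family of one-dimensional Sturm-Liouville operators with confining potentials.

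First, from the decomposition of $(L^{2}_{\mathcal{H}}(\mathbb{Y}))^{\perp}$ displayed above and the explicit expression (\ref{Delta}) for $\Delta_{\mathbb{Y}}$, the Friedrichs extension $\underline{\Delta}_{\mathbb{Y}}$ commutes with the orthogonal projection onto each Fourier mode $e^{2\pi i n y}$, so that
\[
\underline{\Delta}_{\mathbb{Y}}\big|_{(L^{2}_{\mathcal{H}}(\mathbb{Y}))^{\perp}} \;=\; \bigoplus_{n \in \mathbb{Z}\setminus\{0\}} L_{n},
\]
where $L_{n}$ is the self-adjoint realization on $L^{2}([1,\infty), (1+\tfrac{x}{a})^{-a}dx)$ of the formal one-dimensional operator from (\ref{ee2}), obtained as the Friedrichs extension of the closed quadratic form
\[
q_{n}(v) \;=\; \int_{1}^{\infty} \Bigl(|v'(x)|^{2} + 4\pi^{2} n^{2} (1+\tfrac{x}{a})^{2a} |v(x)|^{2}\Bigr)(1+\tfrac{x}{a})^{-a}\,dx,
\]
initially defined on $C_{0}^{\infty}((1,\infty))$ (note that the weight cancels the first-order term after integration by parts, so no gradient cross term appears).

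For each fixed $n \neq 0$, the coefficient $4\pi^{2} n^{2}(1+\tfrac{x}{a})^{2a}$ tends to $+\infty$ as $x \to \infty$. I would prove that the form-domain embedding into $L^{2}([1,\infty),(1+\tfrac{x}{a})^{-a}dx)$ is compact by the standard confining-potential argument: on a bounded interval $[1,R]$ compactness follows from Rellich--Kondrachov, while on $[R,\infty)$ tightness comes from
\[
\int_{R}^{\infty} |v|^{2} (1+\tfrac{x}{a})^{-a}dx \;\leq\; \frac{1}{4\pi^{2} n^{2} (1+\tfrac{R}{a})^{2a}}\, q_{n}(v),
\]
which vanishes uniformly on form-bounded sets as $R \to \infty$. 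Consequently $-L_{n}$ has compact resolvent, and $\sigma(L_{n})$ is discrete with $-\infty$ as its only accumulation point.

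Finally, since $q_{n}(v) \geq 4\pi^{2} n^{2} (1+\tfrac{1}{a})^{2a} \|v\|^{2}$ for every $n \neq 0$, the bottom of $\sigma(-L_{n})$ is bounded below by a quantity that grows like $n^{2}$, so for any $\Lambda \in \mathbb{R}$ only finitely many summands $L_{n}$ contribute eigenvalues of $\underline{\Delta}_{\mathbb{Y}}|_{(L^{2}_{\mathcal{H}}(\mathbb{Y}))^{\perp}}$ above $-\Lambda$. Combined with discreteness of each individual $\sigma(L_{n})$, this yields the claim. I expect the main technical point to be the verification that the Fourier-mode decomposition is genuinely compatible with the Friedrichs extension, i.e.\ that projecting $C_{0}^{\infty}(Y\setminus \partial Y)$ onto each mode yields a form core for $q_{n}$; this however reduces to a routine density argument using truncations in $y$.
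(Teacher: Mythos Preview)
Your argument is correct and follows the same overall strategy as the paper: decompose into Fourier modes, observe that each nonzero mode yields a one-dimensional operator with a confining potential and hence discrete spectrum, and then use the $n^{2}$-growth of the bottom of the spectrum to assemble the direct sum.

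The only difference is in the implementation of the one-dimensional step. The paper performs the unitary conjugation $u\mapsto (1+\tfrac{x}{a})^{a/2}u$, which converts the weighted problem into a standard Schr\"odinger operator $-\partial_{x}^{2}+V_{n}$ on flat $L^{2}([1,\infty),dx)$ with $V_{n}=(\tfrac{1}{4}+\tfrac{1}{2a})(1+\tfrac{x}{a})^{-2}+4\pi^{2}n^{2}(1+\tfrac{x}{a})^{2a}$, and then simply quotes \cite[Theorem~XIII.16]{RS}. You instead stay in the weighted space and prove compactness of the form-domain embedding directly via the Rellich/tightness splitting. Both routes exploit the same mechanism (the $(1+\tfrac{x}{a})^{2a}$ potential blows up), and your observation that the first-order term disappears after integration by parts against the weight is exactly the infinitesimal version of the paper's conjugation. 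The paper's version is shorter because it outsources the analysis to a reference; yours is more self-contained and makes the compactness mechanism explicit.
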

\begin{proof}
By changing $u\rightarrow (1+\frac{x}{a})^{\frac{a}{2}}u$ in (\ref{ee2}), for each $n\in\mathbb{Z}\backslash\{0\}$ the eigenvalue equation of the restriction of $\underline{\Delta}_{\mathbb{Y}}$ to each of the subspaces 
\begin{gather*}
L^{2}([1,\infty),(1+\frac{x}{a})^{-a}dx)\otimes e^{2\pi n iy}
\end{gather*}
becomes $(A+V_{n}-\lambda)u=0$, where
\begin{gather*}
A=-\partial_{x}^{2} \quad \mbox{and} \quad V_{n}=(\frac{1}{4}+\frac{1}{2a})(1+\frac{x}{a})^{-2}+4\pi^{2}n^{2}(1+\frac{x}{a})^{2a} \quad \text{in} \quad L^{2}([1,\infty),dx).
\end{gather*}
Since $V\rightarrow\infty$ as $x\rightarrow\infty$, by standard theory (see e.g. \cite[Theorem XIII.16]{RS}) each $A+V_{n}$ has discrete spectrum. Moreover, by the min-max principle (see e.g. \cite[Theorem XIII.1]{RS}) the spectral bounds are bounded from below by $4\pi^{2} n^{2}$.
\end{proof}

In order to proceed to the study of the continuous spectrum of the Laplacian, we recall some properties of Bessel functions. Let $\nu\in\mathbb{C}$ and denote by $J_{\nu}$, $Y_{\nu}$ the Bessel functions of order $\nu$ of first and second kind respectively. The cylinder function $G_{\nu}(\lambda,x)$ of order $\nu$ is defined for $\lambda\in\mathbb{C}$ and $x\in[1,\infty)$ by 
\begin{gather*}
G_{\nu}(\lambda,x)=Y_{\nu}(\lambda)J_{\nu}(\lambda x)-J_{\nu}(\lambda)Y_{\nu}(\lambda x).
\end{gather*}
\begin{flushleft}
We have then the following two results.
\end{flushleft}
\begin{theorem}\label{WT1}(Weber, see e.g. \cite[14.52]{Wa})
If for some function $f$ of real variable the following integral $\int_{0}^{\infty}f(\lambda)\sqrt{\lambda}d\lambda$ exists and is absolutely convergent, then for any real $\nu$ we have
\begin{gather*}
\int_{1}^{\infty}\left(\int_{0}^{\infty}f(\lambda)G_{\nu}(\lambda,x)G_{\nu}(r,x)\lambda d\lambda\right)xdx=\frac{J_{\nu}^{2}(r)+Y_{\nu}^{2}(r)}{2}\left(f(r+0)+f(r-0)\right),
\end{gather*}
provided that the positive number $r$ lies inside an interval in which $f$ has finite total variation. 
\end{theorem}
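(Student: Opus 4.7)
The plan is to view Weber's identity as the spectral resolution (Plancherel--inversion formula) for a singular Sturm--Liouville problem, and then to deduce it from the standard eigenfunction-expansion machinery. Concretely, consider the Bessel operator
$$L = -\partial_x^2 - \tfrac{1}{x}\partial_x + \tfrac{\nu^2}{x^2}$$
on the weighted Hilbert space $L^2([1,\infty), x\,dx)$ with Dirichlet condition at $x=1$. By construction each $G_\nu(\lambda,\cdot)$ solves $Lu = \lambda^2 u$ and vanishes at $x=1$, so the family $\{G_\nu(\lambda,\cdot)\}_{\lambda>0}$ is precisely the set of generalized eigenfunctions for this realization.

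First I would verify the functional-analytic setup: for real $\nu$ the Bessel operator is limit-point at $\infty$ (after the Liouville substitution $v=\sqrt{x}\,u$ the effective potential $(\nu^2-\tfrac14)/x^2$ decays fast enough), so the Dirichlet condition at $x=1$ selects a unique self-adjoint extension with purely absolutely continuous spectrum $[0,\infty)$. Next I would pin down the spectral density. The classical large-$x$ asymptotics of $J_\nu(\lambda x)$ and $Y_\nu(\lambda x)$ give
$$G_\nu(\lambda, x) = \sqrt{\tfrac{2}{\pi\lambda x}}\,\sqrt{J_\nu^2(\lambda)+Y_\nu^2(\lambda)}\,\sin\bigl(\lambda x + \theta(\lambda)\bigr) + O(x^{-3/2})$$
for a suitable phase $\theta(\lambda)$, and matching with the plane-wave normalization in Weyl--Titchmarsh theory identifies the spectral density as proportional to $\lambda\,(J_\nu^2(\lambda)+Y_\nu^2(\lambda))^{-1}$. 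Consequently, the Weber transform $(Wh)(\lambda) = \int_1^\infty h(x)\,G_\nu(\lambda,x)\,x\,dx$ becomes a unitary from $L^2([1,\infty),x\,dx)$ onto the corresponding weighted $L^2$-space on $[0,\infty)$.

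Weber's identity is then essentially the round-trip $W\circ W^{-1} = \mathrm{id}$ applied to $g(\lambda) := f(\lambda)(J_\nu^2(\lambda)+Y_\nu^2(\lambda))$: the inner $\lambda$-integral on the left-hand side is exactly $W^{-1}g(x)$, and applying $W$ recovers $g(r)$, producing the factor $J_\nu^2(r)+Y_\nu^2(r)$ on the right. The interchange of integrations is legitimate by Fubini thanks to the absolute convergence of $\int_0^\infty f(\lambda)\sqrt{\lambda}\,d\lambda$ together with the uniform amplitude bounds on $G_\nu(\lambda,x)$ just established.

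The main obstacle is the pointwise convergence at a jump and the appearance of the symmetric mean $\tfrac{1}{2}(f(r+0)+f(r-0))$: the abstract spectral theorem yields inversion only in an $L^2$-sense, whereas the conclusion is a pointwise identity under a bounded-variation hypothesis. I would handle this in the spirit of Jordan's convergence test for Fourier series: truncate the $\lambda$-integral at $R$, rewrite the resulting Dirichlet-type kernel using the oscillatory asymptotics of $G_\nu(\lambda,x) G_\nu(r,x)$ near $\lambda=r$, integrate by parts against the total variation of $f$ in a neighbourhood of $r$, and pass to the limit $R\to\infty$. The delicate steps are the uniform-in-$R$ bounds on the truncated kernel needed to exchange the $x$- and $R$-limits, and the stationary-phase isolation of the local contribution at $\lambda=r$ that produces the half-sum; both require careful joint control of the Bessel asymptotics in the variables $\lambda$ and $x$.
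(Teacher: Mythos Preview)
The paper does not prove this theorem. It is stated as a classical result attributed to Weber, with a citation to Watson's treatise (\S14.52), and is used without argument; the companion inversion formula immediately following it is likewise cited, from Titchmarsh. So there is no ``paper's own proof'' to compare against---the paper only extracts from these two quoted results the unitarity of the Weber transform, which is precisely the $L^{2}$ content of your spectral-theory outline.

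Your route via Weyl--Titchmarsh theory for the Dirichlet Bessel operator on $[1,\infty)$ is a legitimate modern repackaging and is genuinely different from Watson's classical treatment, which proceeds by direct manipulation of Bessel-function asymptotics and contour integrals rather than by invoking abstract self-adjoint spectral theory. Conceptually your plan is sound: identify $G_{\nu}(\lambda,\cdot)$ as the Dirichlet generalized eigenfunctions, read off the spectral density from the large-$x$ amplitude, and then upgrade $L^{2}$-inversion to pointwise convergence at a point of bounded variation by a Dirichlet-kernel/Jordan argument.

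There is, however, a genuine gap in one step. Your Fubini justification does not hold as stated. The asymptotic bound you quote gives
\[
\bigl|G_{\nu}(\lambda,x)\,G_{\nu}(r,x)\,\lambda\,x\bigr|\ \lesssim\ \sqrt{\lambda/r},
\]
which is uniformly bounded in $x$ but not integrable over $x\in[1,\infty)$; the outer $x$-integral converges only conditionally through oscillation, so Tonelli/Fubini in absolute value fails. The classical proofs do not swap the integrals in one stroke; they truncate, exploit cancellation, and pass to the limit---in effect the very Dirichlet-kernel analysis you correctly flag for the pointwise half-sum. You should fold the interchange of limits into that same truncation argument rather than invoke Fubini separately. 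With that correction, your strategy is viable, though the uniform-in-cutoff estimates you acknowledge as ``delicate'' are where all the actual work lies.
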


\begin{theorem}\label{WT}(Weber's inversion formula, see \cite{Ti})
If for some function $f$ of real variable the integral $\int_{1}^{\infty}f(x)\sqrt{x}dx$ exists and is absolutely convergent, then for any real $\nu$ we have
\[
\int_{0}^{\infty}\left(\int_{1}^{\infty}\frac{f(x)G_{\nu}(\lambda,x)G_{\nu}(\lambda,r)}{J_{\nu}^{2}(\lambda)+Y_{\nu}^{2}(\lambda)}xdx\right)\lambda d\lambda=\frac{f(r+0)+f(r-0)}{2},
\]
provided that the positive number $r$ lies inside an interval in which $f$ has finite total variation. 
\end{theorem}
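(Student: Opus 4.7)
The plan is to derive the formula from the spectral resolution of the singular Sturm--Liouville operator whose generalized Dirichlet eigenfunctions are precisely the cylinder functions $G_{\nu}(\lambda,\cdot)$. Consider the Bessel operator
\[
L_{\nu}=-\frac{1}{x}\partial_{x}(x\partial_{x})+\frac{\nu^{2}}{x^{2}}
\]
acting in $L^{2}([1,\infty),xdx)$ with Dirichlet boundary condition at $x=1$. Since $J_{\nu}(\lambda x)$ and $Y_{\nu}(\lambda x)$ both solve $(L_{\nu}-\lambda^{2})u=0$ and $G_{\nu}(\lambda,1)=0$ by construction, $G_{\nu}(\lambda,\cdot)$ is the boundary-adapted solution, while for $\mathrm{Im}(\lambda)>0$ the Hankel function $H_{\nu}^{(1)}(\lambda x)=J_{\nu}(\lambda x)+iY_{\nu}(\lambda x)$ decays exponentially as $x\to\infty$. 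This identifies the continuous spectrum as $[0,\infty)$ and reduces the claim to computing the spectral measure.

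First I would construct the resolvent $(L_{\nu}-\lambda^{2})^{-1}$ as an integral operator with Green kernel of the form $K(x,y;\lambda)=c(\lambda)G_{\nu}(\lambda,x_{<})H_{\nu}^{(1)}(\lambda x_{>})$. Using the classical Wronskian identity $J_{\nu}(z)Y_{\nu}'(z)-J_{\nu}'(z)Y_{\nu}(z)=2/(\pi z)$ evaluated at $z=\lambda$ to compute $\partial_{x}G_{\nu}(\lambda,x)|_{x=1}$, the constant is pinned down as $c(\lambda)=-\pi/(2H_{\nu}^{(1)}(\lambda))$, up to a uniform normalisation coming from the $xdx$ measure.

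Next I would apply Stone's formula to extract the spectral measure. The jump of $K$ across the positive $\lambda^{2}$ axis is computed by replacing $H_{\nu}^{(1)}$ by $H_{\nu}^{(2)}=J_{\nu}-iY_{\nu}$ and using the identity $1/H_{\nu}^{(2)}(\lambda)=H_{\nu}^{(1)}(\lambda)/(J_{\nu}^{2}(\lambda)+Y_{\nu}^{2}(\lambda))$ valid for real $\lambda>0$, which follows from $H_{\nu}^{(1)}(\lambda)\overline{H_{\nu}^{(1)}(\lambda)}=J_{\nu}^{2}(\lambda)+Y_{\nu}^{2}(\lambda)$. The cross term $H_{\nu}^{(1)}(\lambda y)-H_{\nu}^{(2)}(\lambda y)=2iY_{\nu}(\lambda y)$ reassembles, together with the $G_{\nu}(\lambda,x_{<})$ factor extended to all $y$, into the symmetric kernel proportional to $G_{\nu}(\lambda,x)G_{\nu}(\lambda,y)$. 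The spectral density then emerges as $\lambda d\lambda/(J_{\nu}^{2}(\lambda)+Y_{\nu}^{2}(\lambda))$, and Stone's formula produces $\tfrac{1}{2}(f(r+0)+f(r-0))$ on one side and precisely the stated double integral on the other.

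The main obstacle is justifying the interchange of the limit $\mathrm{Im}(\lambda)\to 0^{+}$ with the $y$-integral and the subsequent $\lambda$-integral. For large $\lambda$ one exploits the standard asymptotics $J_{\nu}^{2}(\lambda)+Y_{\nu}^{2}(\lambda)\sim 2/(\pi\lambda)$ together with the uniform oscillatory bounds on $G_{\nu}(\lambda,\cdot)$ over compact sets; near $\lambda=0$ the singularity of $Y_{\nu}$ cancels between numerator and denominator in the ratio $G_{\nu}G_{\nu}/(J_{\nu}^{2}+Y_{\nu}^{2})$, so the integrand stays bounded. Under the hypothesis that $\int_{1}^{\infty}f(x)\sqrt{x}dx$ converges absolutely, dominated convergence then legitimises the interchange, and the finite total variation assumption near $r$ supplies the symmetric pointwise limit on the left hand side exactly as in the classical proof of Fourier integral inversion.
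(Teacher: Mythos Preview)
The paper does not prove this theorem; it merely states the result and cites Titchmarsh \cite{Ti}. So there is no argument in the paper to compare your proposal against.

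Your spectral-theoretic route via the Dirichlet Bessel operator on $L^{2}([1,\infty),x\,dx)$ and Stone's formula is a correct and standard modern derivation. One small inaccuracy in your sketch: the symmetric kernel does not arise from the cross term $H_{\nu}^{(1)}(\lambda y)-H_{\nu}^{(2)}(\lambda y)=2iY_{\nu}(\lambda y)$ alone. What actually happens is that the jump of the Green kernel is
\[
-\frac{\pi}{2}\,G_{\nu}(\lambda,x_{<})\left(\frac{H_{\nu}^{(1)}(\lambda x_{>})}{H_{\nu}^{(1)}(\lambda)}-\frac{H_{\nu}^{(2)}(\lambda x_{>})}{H_{\nu}^{(2)}(\lambda)}\right),
\]
and the bracketed numerator, after putting over the common denominator $H_{\nu}^{(1)}(\lambda)H_{\nu}^{(2)}(\lambda)=J_{\nu}^{2}(\lambda)+Y_{\nu}^{2}(\lambda)$, equals $-2iG_{\nu}(\lambda,x_{>})$. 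This is what produces the symmetric factor $G_{\nu}(\lambda,x)G_{\nu}(\lambda,y)$ and the weight $(J_{\nu}^{2}+Y_{\nu}^{2})^{-1}$ simultaneously. With that correction your outline goes through.

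Titchmarsh's original 1924 argument is in the classical style of direct Fourier-type inversion and does not invoke abstract operator theory. Your approach has the merit of explaining structurally why the particular weight appears (it is the spectral measure of a concrete self-adjoint operator), at the cost of importing Stone's formula and the limiting-absorption justifications you mention; the classical route is more elementary but less conceptual.
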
 

From the above orthogonality relations we can define for any real $\nu$ the Weber transform of any $f\in C_{0}^{\infty}([0,\infty))$ by
\begin{gather*}
\mathbb{W}_{\nu}[f](x)=\int_{0}^{\infty}f(\lambda)G_{\nu}(\lambda,x)\lambda d\lambda.
\end{gather*}
$\mathbb{W}_{\nu}$ can be easily extended to a bijective isometry from $L^2([0,\infty), (J_{\nu}^{2}(\lambda)+Y_{\nu}^{2}(\lambda))\lambda d\lambda)$ to $L^2([1,\infty), x\, dx)$ with inverse given by 
\begin{gather*}
\mathbb{W}_{\nu}^{-1}[g](\lambda)=\int_{1}^{\infty}\frac{g(x)G_{\nu}(\lambda,x)}{J_{\nu}^{2}(\lambda)+Y_{\nu}^{2}(\lambda)}xdx,
\end{gather*}
for any $g\in C_{0}^{\infty}([1,\infty))$ (see the Appendix of \cite{HRS} for details). 
\par
By (\ref{ee2}), the eigenvalue equation of the restriction of $\underline{\Delta}_{\mathbb{Y}}$ to the subspace of the harmonic components $L_{\mathcal{H}}^{2}(\mathbb{Y})$ is given by the following Bessel equation 
\begin{gather*}
u''(x)-\frac{1}{1+\frac{x}{a}}u'(x)+\lambda u(x)=0.
\end{gather*}
Hence, by the Dirichlet condition at $x=1$ the generalized $\lambda$-eigenfunctions are given by 
\begin{eqnarray*}
(a+x)^{\frac{a+1}{2}}\lefteqn{G_{\frac{a+1}{2}}((a+1)\sqrt{\lambda},\frac{a+x}{a+1})}\\
&=&(a+x)^{\frac{a+1}{2}}\Big(Y_{\frac{a+1}{2}}((a+1)\sqrt{\lambda})J_{\frac{a+1}{2}}((a+x)\sqrt{\lambda})-J_{\frac{a+1}{2}}((a+1)\sqrt{\lambda})Y_{\frac{a+1}{2}}((a+x)\sqrt{\lambda})\Big).
\end{eqnarray*}
Therefore the continuous part of the spectral theorem can be expressed in terms of the Weber transform as follows. 

\begin{theorem}(Spectral theorem - continuous part)
The domain of the restriction of the Laplacian $\underline{\Delta}_{\mathbb{Y}}$ to the space $L_{\mathcal{H}}^{2}(\mathbb{Y})$ of harmonic components on the boundary from decomposition \eqref{hd}, is given by 
\begin{eqnarray*}
\lefteqn{\mathcal{D}(\underline{\Delta}_{\mathbb{Y}}|_{L_{\mathcal{H}}^{2}(\mathbb{Y})})}\\
&=&\Big\{u(x)\in L^{2}([1,\infty),xdx) \, | \, \lambda^{2}\mathbb{W}_{\frac{a+1}{2}}^{-1}[y^{-\frac{a+1}{2}}u((a+1)y-a)](\lambda) \in L^2([0,\infty), (J_{b}^{2}(\lambda)+Y_{b}^{2}(\lambda))\lambda d\lambda)\Big\}.
\end{eqnarray*}
For $u\in \mathcal{D}(\underline{\Delta}_{\mathbb{Y}}|_{L_{\mathcal{H}}^{2}(\mathbb{Y})})$ we have that
\[
(\underline{\Delta}_{\mathbb{Y}}u)(x)=(\frac{a+x}{a+1})^{\frac{a+1}{2}}\mathbb{W}_{\frac{a+1}{2}}[(\frac{\lambda}{a+1})^{2}\mathbb{W}_{\frac{a+1}{2}}^{-1}[y^{-\frac{a+1}{2}}u((a+1)y-a)](\lambda)](\frac{a+x}{a+1}).
\]
Furthermore, for the spectrum of the Laplacian $\underline{\Delta}_{\mathbb{Y}}$ on $\mathbb{Y}$ we have that $\sigma_{\mathrm{sing}}(\underline{\Delta}_{\mathbb{Y}})=\emptyset$ and $\sigma_{\mathrm{ac}}(\underline{\Delta}_{\mathbb{Y}})=\sigma_{\mathrm{cont}}(\underline{\Delta}_{\mathbb{Y}})=[0,\infty)$.
\end{theorem}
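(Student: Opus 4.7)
The plan is to reduce the Dirichlet problem for $\underline{\Delta}_{\mathbb{Y}}|_{L^{2}_{\mathcal{H}}(\mathbb{Y})}$ to a standard Bessel problem by an explicit unitary change of coordinates, and then to read off the spectral decomposition from Weber's inversion formula. I introduce the new variable $y=(a+x)/(a+1)$ and the substitution $w(y)=y^{-(a+1)/2}\,u((a+1)y-a)$. A direct computation gives $dx=(a+1)\,dy$ and $(1+x/a)^{-a}=((a+1)y/a)^{-a}$, and combining these with the factor $y^{-(a+1)/2}$ verifies that $u\mapsto w$ is, up to a positive constant, an isometry
\[
U:L^{2}\bigl([1,\infty),(1+x/a)^{-a}dx\bigr)\to L^{2}\bigl([1,\infty),y\,dy\bigr),
\]
and that $u(1)=0$ is equivalent to $w(1)=0$, so $U$ respects the Dirichlet condition.

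Next I track how the operator transforms. Starting from $u''-(1+x/a)^{-1}u'+\lambda u=0$ and inserting the two substitutions, a one-line calculation yields
\[
y^{2}w''+yw'+\bigl(\mu^{2}y^{2}-((a+1)/2)^{2}\bigr)w=0,\qquad \mu^{2}=(a+1)^{2}\lambda,
\]
i.e.\ the Bessel equation of order $(a+1)/2$. Thus $U$ conjugates the Friedrichs realization of $\underline{\Delta}_{\mathbb{Y}}|_{L^{2}_{\mathcal{H}}}$, up to the factor $(a+1)^{-2}$ and a sign, to the Dirichlet Bessel operator $L_{(a+1)/2}=-\partial_{y}^{2}-y^{-1}\partial_{y}+((a+1)/2)^{2}y^{-2}$ on $L^{2}([1,\infty),y\,dy)$, whose Dirichlet generalized eigenfunctions for eigenvalue $\lambda^{2}$ are precisely the cylinder functions $G_{(a+1)/2}(\lambda,y)$; undoing the two substitutions reproduces the eigenfunctions $(a+x)^{(a+1)/2}G_{(a+1)/2}((a+1)\sqrt{\lambda},(a+x)/(a+1))$ already displayed before the theorem.

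I then diagonalize $L_{(a+1)/2}$ using the Weber transform $\mathbb{W}_{(a+1)/2}$. By Theorems~\ref{WT1} and~\ref{WT} this is an isometric isomorphism from $L^{2}([0,\infty),(J_{(a+1)/2}^{2}+Y_{(a+1)/2}^{2})\lambda\,d\lambda)$ onto $L^{2}([1,\infty),y\,dy)$ that conjugates $L_{(a+1)/2}$ to multiplication by $\lambda^{2}$. Pulling this diagonalization back through $U$ and re-expressing in the variable $x$ yields the displayed formula for $\underline{\Delta}_{\mathbb{Y}}u$, with the prefactor $(a+1)^{-2}$ absorbed into the spectral variable and appearing as $(\lambda/(a+1))^{2}$; the domain characterization then follows from the standard fact that the maximal domain of multiplication by $\lambda^{2}$ is $\{f:\lambda^{2}f\in L^{2}((J^{2}+Y^{2})\lambda\,d\lambda)\}$. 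Because multiplication by $\lambda^{2}$ against the non-atomic measure is purely absolutely continuous with spectrum $[0,\infty)$, the harmonic component contributes $[0,\infty)$ to the absolutely continuous spectrum and nothing to the singular spectrum; combined with Theorem~\ref{ds}, which forces $(L^{2}_{\mathcal{H}})^{\perp}$ to contribute only discrete spectrum, this yields $\sigma_{\mathrm{sing}}(\underline{\Delta}_{\mathbb{Y}})=\emptyset$ and $\sigma_{\mathrm{ac}}(\underline{\Delta}_{\mathbb{Y}})=\sigma_{\mathrm{cont}}(\underline{\Delta}_{\mathbb{Y}})=[0,\infty)$.

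The main technical point to handle carefully is the bookkeeping in the double change of variables: keeping track of the powers of $(a+1)$ and the conjugating weight $y^{(a+1)/2}$ so that one lands at the eigenfunctions in exactly the form recorded before the theorem, and verifying that $U$ really maps the Friedrichs domain of the cusp Laplacian onto that of $L_{(a+1)/2}$ (so that the abstract spectral theorem applies to the correct self-adjoint operator). Once this unitary reduction to a Bessel equation is in place, the rest is immediate from Weber's inversion formula.
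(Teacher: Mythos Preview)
Your proposal is correct and follows precisely the route the paper itself takes: the paper does not give a separate proof, but rather sets up the theorem in the paragraphs preceding it by identifying the $n=0$ equation as a Bessel equation, writing down the Dirichlet generalized eigenfunctions in terms of the cylinder functions $G_{(a+1)/2}$, and then invoking the Weber transform (Theorems~\ref{WT1} and~\ref{WT}). Your explicit change of variables $y=(a+x)/(a+1)$ together with the conjugation by $y^{(a+1)/2}$ is exactly what underlies the paper's displayed eigenfunction formula, and your appeal to Theorem~\ref{ds} for the orthogonal complement matches how the paper separates the continuous and discrete parts; you have simply made explicit the unitary bookkeeping that the paper leaves implicit.
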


According to the previous theorem, the restriction of the resolvent of the Laplacian $\underline{\Delta}_{\mathbb{Y}}$ to the space $L_{\mathcal{H}}^{2}(\mathbb{Y})$ is given for any $\mu\in \mathbb{C}\backslash [0,\infty)$ by the bounded map $L_{\mathcal{H}}^{2}(\mathbb{Y})\ni u\mapsto (\underline{\Delta}_{\mathbb{Y}}-\mu)^{-1}u\in L_{\mathcal{H}}^{2}(\mathbb{Y})$, such that 
\begin{eqnarray*}
\lefteqn{(\underline{\Delta}_{\mathbb{Y}}-\mu)^{-1}u(x)}\\
&=&(\frac{a+x}{a+1})^{\frac{a+1}{2}}\mathbb{W}_{\frac{a+1}{2}}[\frac{1}{(\frac{\lambda}{a+1})^2-\mu}\mathbb{W}_{\frac{a+1}{2}}^{-1}[y^{-\frac{a+1}{2}}u((a+1)y-a)](\lambda)](\frac{a+x}{a+1}).
\end{eqnarray*}
If we further restrict $u\in L_{\mathcal{H}}^{2}(\mathbb{Y}) \cap C_{0}^{\infty}(Y\backslash\partial Y)$, we get an integral representation of the resolvent, namely 
\begin{gather}\label{res}
(\underline{\Delta}_{\mathbb{Y}}-\mu)^{-1}u(x)=\int_{1}^{\infty}k(\mu,x,y)u(y)dx,
\end{gather}
with kernel
\begin{gather}\label{ker}
k(\mu,x,y)=(\frac{a+x}{a+y})^{\frac{a+1}{2}}(a+y)\int_{0}^{\infty}\frac{G_{\frac{a+1}{2}}(\lambda,\frac{a+x}{a+1})G_{\frac{a+1}{2}}(\lambda,\frac{a+y}{a+1})}{J_{\frac{a+1}{2}}^{2}(\lambda)+Y_{\frac{a+1}{2}}^{2}(\lambda)}\frac{\lambda}{\lambda^{2}-\mu(a+1)^{2}}d\lambda.
\end{gather}

From the asymptotic behavior of the Bessel functions (see e.g. \cite[Chapter VII]{Wa}), for any $r>0$ there exists some $c(r)>0$ such that
\begin{gather*}
\big|\frac{G_{\frac{a+1}{2}}(\lambda,\frac{a+x}{a+1})G_{\frac{a+1}{2}}(\lambda,\frac{a+y}{a+1})}{J_{\frac{a+1}{2}}^{2}(\lambda)+Y_{\frac{a+1}{2}}^{2}(\lambda)}\lambda\big|\leq c(r)\frac{a+1}{\sqrt{(a+x)(a+y)}}
\end{gather*}
with $\lambda\in[r,\infty)$ and $x,y\in[1,\infty)$. Furthermore, there exists some $c'(a)>0$ such that 
\begin{eqnarray*}
\lefteqn{\big|\frac{G_{\frac{a+1}{2}}(\lambda,\frac{a+x}{a+1})G_{\frac{a+1}{2}}(\lambda,\frac{a+y}{a+1})}{J_{\frac{a+1}{2}}^{2}(\lambda)+Y_{\frac{a+1}{2}}^{2}(\lambda)}\lambda\big|}\\
&\leq& c'(a)\big|(\frac{(a+x)(a+y)}{(a+1)^{2}})^{\frac{a+1}{2}}-(\frac{a+x}{a+y})^{\frac{a+1}{2}}-(\frac{a+y}{a+x})^{\frac{a+1}{2}}+(\frac{(a+x)(a+y)}{(a+1)^{2}})^{-\frac{a+1}{2}}\big|
\end{eqnarray*}
with $\lambda\in(0,r]$ and $x,y\in[1,\infty)$. Hence, the kernel \eqref{ker} is well defined. 

Moreover, by following the ideas in \cite{St}, $k(\cdot,x,y)$ can be meromorphically continued to the logarithmic cover of $\mathbb{C}$. More precisely, by letting $\mu=e^{z}$, $\mathrm{Im}(z)\in(0,2\pi)$, and changing variables in \eqref{ker}, we obtain that
\begin{gather}\label{ker2}
k(e^{z},x,y)=(\frac{a+x}{a+y})^{\frac{a+1}{2}}(a+y)\int_{-\infty}^{\infty}\frac{G_{\frac{a+1}{2}}((a+1)e^{w},\frac{a+x}{a+1})G_{\frac{a+1}{2}}((a+1)e^{w},\frac{a+y}{a+1})}{J_{\frac{a+1}{2}}^{2}((a+1)e^{w})+Y_{\frac{a+1}{2}}^{2}((a+1)e^{w})}\frac{e^{2w}}{e^{2w}-e^{z}}dw.
\end{gather}
Denote by $H_{\nu}^{(1)}(\lambda)=J_{\nu}(\lambda)+iY_{\nu}(\lambda)$ and $H_{\nu}^{(2)}(\lambda)=J_{\nu}(\lambda)-iY_{\nu}(\lambda)$, $\nu,\lambda\in\mathbb{C}$, the Hankel function of the first and second kind respectively of order $\nu$. Recall that the Hankel functions have simple zeros on the logarithmic cover of $\mathbb{C}$ and that the following identity holds 
$$
H_{\nu}^{(1)}(e^{z})H_{\nu}^{(2)}(e^{z})=J_{\nu}^{2}(e^{z})+Y_{\nu}^{2}(e^{z}), \quad \nu,z\in \mathbb{C}.
$$
For $a\in\mathbb{R}$ let the following discrete sets of points in $\mathbb{C}$, namely
$$
\mathcal{B}_{a}=\Big\{z\in\mathbb{C}\, |\, H_{\frac{a+1}{2}}^{(1)}((a+1)e^{z})H_{\frac{a+1}{2}}^{(2)}((a+1)e^{z})=0\Big\},
$$ 
$$
\mathcal{B}'_{a}=\Big\{z+2\pi i\in\mathbb{C}\, |\, H_{\frac{a+1}{2}}^{(1)}((a+1)e^{z})H_{\frac{a+1}{2}}^{(2)}((a+1)e^{z})=0\Big\}.
$$ 
and
\begin{gather}\label{B}
\mathcal{H}_{a}=\bigcup_{k\in\mathbb{Z}}\Big\{z\in\mathbb{C}\, |\, H_{\frac{a+1}{2}}^{(1)}((a+1)e^{\frac{z}{2}+k\pi i})H_{\frac{a+1}{2}}^{(2)}((a+1)e^{\frac{z}{2}+k\pi i})=0\quad\, \mbox{and}\quad\, \mathrm{Im}(z)\in\mathbb{R}\backslash(0,2\pi) \Big\}.
\end {gather}

If $z\in \mathbb{C}\backslash\mathcal{H}_{a}$ with $\mathrm{Im}(z)\leq 0$, then we can deform the path of integration in \eqref{ker2} from $\mathbb{R}$ to $\Gamma=(-\infty,\alpha]\cup \Lambda \cup [\beta,\infty)$, for some $\alpha,\beta\in\mathbb{R}$ with $\alpha<\beta$, where $\Lambda$ is any smooth simple curve running from $\alpha$ to $\beta$ such that $z\in\mathbb{C}\backslash\Gamma$ lies on the left of $\Gamma$. Then, by Cauchy's theorem, \eqref{ker2} implies 
\begin{eqnarray}\nonumber
\lefteqn{k(e^{z},x,y)}\\\nonumber
&=&(\frac{a+x}{a+y})^{\frac{a+1}{2}}(a+y)\Big(\int_{\Gamma}\frac{G_{\frac{a+1}{2}}((a+1)e^{w},\frac{a+x}{a+1})G_{\frac{a+1}{2}}((a+1)e^{w},\frac{a+y}{a+1})}{J_{\frac{a+1}{2}}^{2}((a+1)e^{w})+Y_{\frac{a+1}{2}}^{2}((a+1)e^{w})}\frac{e^{2w}}{e^{2w}-e^{z}}dw\\\nonumber
&&-2\pi i\sum_{w_{i}\in\Omega\cap \mathcal{B}_{a}}\big(\frac{e^{2w_{i}}G_{\frac{a+1}{2}}((a+1)e^{w_{i}},\frac{a+x}{a+1})G_{\frac{a+1}{2}}((a+1)e^{w_{i}},\frac{a+y}{a+1})}{e^{2w_{i}}-e^{z}}\big)\\\label{ker3}
&&\times\big(\lim_{w\rightarrow w_{i}}\frac{w-w_{i}}{H_{\frac{a+1}{2}}^{(1)}((a+1)e^{w})H_{\frac{a+1}{2}}^{(2)}((a+1)e^{w})}\big)\Big),
\end{eqnarray}
where by $\Omega$ we denote the area between $\Lambda$ and the real axis.

Similarly, by \eqref{ker2} we have that
\begin{gather}\label{ker4}
k(e^{z},x,y)=(\frac{a+x}{a+y})^{\frac{a+1}{2}}(a+y)\int_{\mathbb{R}+2\pi i}\frac{G_{\frac{a+1}{2}}((a+1)e^{w-2\pi i},\frac{a+x}{a+1})G_{\frac{a+1}{2}}((a+1)e^{w-2\pi i},\frac{a+y}{a+1})}{J_{\frac{a+1}{2}}^{2}((a+1)e^{w-2\pi i})+Y_{\frac{a+1}{2}}^{2}((a+1)e^{w-2\pi i})}\frac{e^{2w}}{e^{2w}-e^{z}}dw.
\end{gather}
Then, if $z\in \mathbb{C}\backslash\mathcal{H}_{a}$ with $\mathrm{Im}(z)\geq 2\pi$, we can deform the new path from $\mathbb{R}+2\pi i$ to $\Gamma'=(-\infty+2\pi i,\alpha'+2\pi i]\cup \Lambda' \cup [\beta'+2\pi i,\infty+2\pi i)$, for some $\alpha',\beta'\in\mathbb{R}$ with $\alpha'<\beta'$, and $\Lambda'$ to be any smooth simple curve running from $\alpha'+2\pi i$ to $\beta'+2\pi i$ such that $z\in\mathbb{C}\backslash\Gamma'$ lies on the right of $\Gamma'$. In this case, from \eqref{ker4} Cauchy's theorem implies 
\begin{eqnarray}\nonumber
\lefteqn{k(e^{z},x,y)}\\\nonumber
&=&(\frac{a+x}{a+y})^{\frac{a+1}{2}}(a+y)\Big(\int_{\Gamma'}\frac{G_{\frac{a+1}{2}}((a+1)e^{w-2\pi i},\frac{a+x}{a+1})G_{\frac{a+1}{2}}((a+1)e^{w-2\pi i},\frac{a+y}{a+1})}{J_{\frac{a+1}{2}}^{2}((a+1)e^{w-2\pi i})+Y_{\frac{a+1}{2}}^{2}((a+1)e^{w-2\pi i})}\frac{e^{2w}}{e^{2w}-e^{z}}dw\\\nonumber
&&+2\pi i\sum_{w_{i}\in\Omega'\cap \mathcal{B}'_{a}}\big(\frac{e^{2w_{i}}G_{\frac{a+1}{2}}((a+1)e^{w_{i}-2\pi i},\frac{a+x}{a+1})G_{\frac{a+1}{2}}((a+1)e^{w_{i}-2\pi i},\frac{a+y}{a+1})}{e^{2w_{i}}-e^{z}}\big)\\\label{ker5}
&&\times\big(\lim_{w\rightarrow w_{i}}\frac{w-w_{i}}{H_{\frac{a+1}{2}}^{(1)}((a+1)e^{w-2\pi i})H_{\frac{a+1}{2}}^{(2)}((a+1)e^{w-2\pi i})}\big)\Big),
\end{eqnarray}
where by $\Omega'$ we now denote the area between $\Lambda'$ and the axis $\{z\in \mathbb{C}\, |\, \mathrm{Im}(z)=2\pi \}$.

By \eqref{res}, \eqref{ker3} and \eqref{ker5} we see that the resolvent family $(\underline{\Delta}_{\mathbb{Y}}-e^{z})^{-1}$ of $\underline{\Delta}_{\mathbb{Y}}$ restricted to the space $L_{\mathcal{H}}^{2}(\mathbb{Y})$ admits a meromorphic continuation over $\mathbb{C}$ with simple poles that coincide with the set $\mathcal{H}_{a}$. Further, similarly to \cite[Section 2.2]{HRS}, by the asymptotic behavior of the Bessel functions we can easily see that this continuation is a bounded operator family from $e^{-x^2}\otimes L_{\mathcal{H}}^{2}(\mathbb{Y})$ to $e^{x^2}\otimes L_{\mathcal{H}}^{2}(\mathbb{Y})$ (i.e. not on the initial space $L_{\mathcal{H}}^{2}(\mathbb{Y})$). Finally, by Theorem \ref{ds}, the resolvent $(\underline{\Delta}_{\mathbb{Y}}-e^{z})^{-1}$ restricted to the space $(L_{\mathcal{H}}^{2}(\mathbb{Y}))^{\perp}$ can be meromorphically continued to $\mathbb{C}$ with poles lying on the set $\cup_{k\in\mathbb{Z}}\{z\in\mathbb{C}\, |\,\mathrm{Im}(z)=2k\pi\}$. We can therefore conclude the following. 
\begin{theorem}\label{mc}
The resolvent family $(\underline{\Delta}_{\mathbb{Y}}-e^{z})^{-1}\in\mathcal{L}(L^{2}(\mathbb{Y}))$, $\mathrm{Im}(z)\in(0,2\pi)$, admits a meromorphic continuation over $z$ to the whole complex plane $\mathbb{C}$ as an operator family in $\mathcal{L}(e^{-x^2}\otimes L^{2}(\mathbb{Y}),e^{x^2}\otimes L^{2}(\mathbb{Y}))$ with simple poles that consist of two sets: 
\begin{itemize}
\item[(i)] A discrete set of points that is contained in $\cup_{k\in\mathbb{Z}}\{z\in\mathbb{C}\, |\, \mathrm{Im}(z)=2k\pi\}$.
\item[(ii)] The set $\mathcal{H}_{a}$ defined in \eqref{B}.
\end{itemize}
\end{theorem}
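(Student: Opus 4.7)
The plan is to invoke the Hodge decomposition \eqref{hd} and treat the two summands $L^{2}_{\mathcal{H}}(\mathbb{Y})$ and $(L^{2}_{\mathcal{H}}(\mathbb{Y}))^{\perp}$ separately, then glue the two meromorphic continuations. Since $\underline{\Delta}_{\mathbb{Y}}$ preserves this decomposition (it is block-diagonal in the $S^{1}$-Fourier components), the resolvent does as well, so it suffices to produce the continuation on each block.

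For the harmonic block, I would start from the integral representation \eqref{res}--\eqref{ker} of the resolvent on a dense subset $L^{2}_{\mathcal{H}}(\mathbb{Y})\cap C_{0}^{\infty}(Y\setminus\partial Y)$ and pass to the logarithmic variable $\mu=e^{z}$ as in \eqref{ker2}. For $\mathrm{Im}(z)\in(0,2\pi)$ the integral runs along $\mathbb{R}$, and the two contour-deformation formulas \eqref{ker3} (valid once $\mathrm{Im}(z)\leq 0$) and \eqref{ker5} (valid once $\mathrm{Im}(z)\geq 2\pi$) provide analytic extensions to the two half-planes; the two representations agree on the original strip, so together they give a meromorphic continuation of $k(e^{z},x,y)$ to all of $\mathbb{C}$. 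The only singularities come from the residue terms, which are produced exactly at the zeros of $H_{(a+1)/2}^{(1)}((a+1)e^{w})H_{(a+1)/2}^{(2)}((a+1)e^{w})$ inside the deformation regions; by the definition \eqref{B} these are the points of $\mathcal{H}_{a}$, and the zeros are simple by the well-known simplicity of Hankel-function zeros. To upgrade this kernel statement to an operator statement in $\mathcal{L}(e^{-x^{2}}\otimes L^{2}_{\mathcal{H}}(\mathbb{Y}),e^{x^{2}}\otimes L^{2}_{\mathcal{H}}(\mathbb{Y}))$, I would use the two Bessel asymptotic bounds stated right after \eqref{ker} to control the integrand uniformly on compacta of the $z$-plane away from $\mathcal{H}_{a}$, the Gaussian weights absorbing any polynomial or exponential growth of $G_{(a+1)/2}((a+1)e^{w},(a+x)/(a+1))$ in $x$ that shows up once $\mathrm{Re}(w)$ is large.

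For the orthogonal complement, Theorem~\ref{ds} tells us that $\underline{\Delta}_{\mathbb{Y}}|_{(L^{2}_{\mathcal{H}}(\mathbb{Y}))^{\perp}}$ has purely discrete spectrum $\{\lambda_{j}\}\subset [4\pi^{2},\infty)$. Writing the resolvent by the spectral theorem as $\sum_{j}(e^{z}-\lambda_{j})^{-1}P_{j}$ already defines a meromorphic operator family on $\mathbb{C}$, with simple poles exactly at the points $z=\log\lambda_{j}+2k\pi i$, $k\in\mathbb{Z}$, all of which lie in $\bigcup_{k\in\mathbb{Z}}\{\mathrm{Im}(z)=2k\pi\}$, giving (i). One only has to check that the series converges in the weighted space $\mathcal{L}(e^{-x^{2}}\otimes \cdot,e^{x^{2}}\otimes\cdot)$ locally uniformly in $z$ away from the poles; this follows from Weyl's law for the potentials $V_{n}$ of Theorem~\ref{ds} together with the Gaussian weight, which easily dominates the growth of each normalized eigenfunction.

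Combining the two blocks yields the claim, the two pole sets being (i) and (ii) respectively. The main technical obstacle I expect is the second part of the harmonic block: namely verifying, with bookkeeping control, that the contour-shifted integrals in \eqref{ker3} and \eqref{ker5} define bounded operators between the weighted spaces $e^{\mp x^{2}}\otimes L^{2}_{\mathcal{H}}(\mathbb{Y})$ locally uniformly in $z$, and that the residue contributions (which are rank-one kernels $(a+x)^{(a+1)/2}G(\cdot,\tfrac{a+x}{a+1})\otimes(a+y)^{(a+1)/2}G(\cdot,\tfrac{a+y}{a+1})$ up to scalar factors) are likewise bounded between these weighted spaces; at this point it is natural to refer to the parallel estimates carried out in \cite[Section~2.2]{HRS}, as the author indicates.
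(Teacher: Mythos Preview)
Your proposal is correct and follows essentially the same route as the paper: split via the Hodge decomposition \eqref{hd}, handle the harmonic block by the contour-deformation formulas \eqref{ker2}--\eqref{ker5} to produce the poles in $\mathcal{H}_{a}$, invoke the Bessel asymptotics (and \cite[Section~2.2]{HRS}) for boundedness between the Gaussian-weighted spaces, and use Theorem~\ref{ds} on the orthogonal complement to get the poles on the horizontal lines $\mathrm{Im}(z)=2k\pi$. The only superfluous step is your appeal to Weyl's law for the non-harmonic block: since that restriction has purely discrete spectrum, its resolvent is already a meromorphic $\mathcal{L}(L^{2})$-valued family in $\mu$, hence in $z$, without any weights needed.
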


\section{Spectral theory on the surface with cusp}

Let $L^{2}(\mathbb{M})$ be the space of all functions on $\mathbb{M}$ that are square integrable with respect to the Riemannian measure induced by the metric $g$. Denote by $\Delta_{\mathbb{M}}$ the Laplacian on $\mathbb{M}$ induced by $g$ and by $\underline{\Delta}_{\mathbb{M}}$ the unique closed self-adjoint extensions in $L^{2}(\mathbb{M})$ of $\Delta_{\mathbb{M}}$ in $C_{0}^{\infty}(M)$. Further, let $\mathbb{B}=(B,g_{B})$ be a closed (i.e. compact without boundary) Riemannian surface such that $\mathbb{X}\cup\{[1,2]\times S^{1}\}$ is isometrically embedded into $\mathbb{B}$. Let $\Delta_{\mathbb{B}}$ be the Laplacian on $\mathbb{B}$, and similarly denote by $\underline{\Delta}_{\mathbb{B}}$ the unique closed extensions in $L^{2}(\mathbb{B})$ of $\Delta_{\mathbb{B}}$ in $C^{\infty}(B)$. Since $\mathbb{B}$ is closed, the resolvent $(\underline{\Delta}_{\mathbb{B}}-e^{z})^{-1}$ is a meromorphic family over $z\in\mathbb{C}$ with simple poles in the set $\cup_{k\in\mathbb{Z}}\{z\in\mathbb{C}\, |\, \mathrm{Im}(z)=2k\pi\}$. 

Let $\chi_{2}\in C^{\infty}(M)$ such that $\chi_{2}(x)=0$ on $X\cup[1,\frac{4}{3}]\times S^{1}$ and $\chi_{2}(x)=1$ when $x\geq\frac{5}{3}$, and define $\chi_{1}=1-\chi_{2}$. Further, take $\psi_{1}\in C_{0}^{\infty}(M)$ such that $\psi_{1}=1$ on $X\cup[1,\frac{5}{3}]\times S^{1}$ and $\psi_{1}=0$ when $x\geq2$. Also, let $\psi_{2}\in C^{\infty}(M)$ such that $\psi_{2}=1$ when $x\geq\frac{4}{3}$ and $\psi_{2}=0$ on $X$. Finally, assume for simplicity that all functions $\chi_{1}$, $\chi_{2}$, $\psi_{1}$ and $\psi_{2}$ when restricted to $Y$, they depend only on the $x$ variable.

If we denote by $\tilde{e}^{\pm x^{2}}$ a smooth extension to $M$ of the function $e^{\pm x^{2}}$ on $Y$, then similarly to \cite[Theorem 1]{Mu} or \cite[Theorem 1.1]{HRS}) we have the following continuation result. 
\begin{theorem}\label{continuation}
The resolvent $(\underline{\Delta}_{\mathbb{M}}-e^{z})^{-1}\in\mathcal{L}(L^{2}(\mathbb{M}))$, $\mathrm{Im}(z)\in(0,2\pi)$, admits a meromorphic continuation over $z$ to the whole complex plane $\mathbb{C}$ as an operator family in $\mathcal{L}(\tilde{e}^{-x^2}\otimes L^{2}(\mathbb{M}),\tilde{e}^{x^2}\otimes L^{2}(\mathbb{M}))$ with simple poles that are contained in the following three sets: 
\begin{itemize}
\item[(i)] A discrete set of points that is contained in $\cup_{k\in\mathbb{Z}}\{z\in\mathbb{C}\, |\, \mathrm{Im}(z)=2k\pi\}$,
\item[(ii)] The set $\mathcal{H}_{a}$ defined in \eqref{B},
\item[(iii)] The poles of the meromorphic family over $z\in\mathbb{C}$, $(I+K_{a}(z))^{-1}$,
\end{itemize}
where 
\begin{gather}\label{kdef}
K_{a}(z)=[\underline{\Delta}_{\mathbb{M}},\psi_{1}](\underline{\Delta}_{\mathbb{B}}-e^{z})^{-1}\chi_{1}+[\underline{\Delta}_{\mathbb{M}},\psi_{2}](\underline{\Delta}_{\mathbb{Y}}-e^{z})^{-1}\chi_{2}
\end{gather}
is a meromorphic family over $z\in\mathbb{C}$ of compact operators with poles of finite rank.
\end{theorem}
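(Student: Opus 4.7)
The plan is to build a parametrix for $\underline{\Delta}_{\mathbb{M}}-e^{z}$ by gluing together the resolvent on the closed surface $\mathbb{B}$ (which handles the compact part) and the resolvent on the cusp $\mathbb{Y}$ (which handles the end), and then to invert the resulting error operator via analytic Fredholm theory. Concretely, I would set
$$
P(z)=\psi_{1}(\underline{\Delta}_{\mathbb{B}}-e^{z})^{-1}\chi_{1}+\psi_{2}(\underline{\Delta}_{\mathbb{Y}}-e^{z})^{-1}\chi_{2},
$$
which is well-defined thanks to the supports of $\chi_{i},\psi_{i}$ and the isometric embedding $\mathbb{X}\cup\{[1,2]\times S^{1}\}\hookrightarrow\mathbb{B}$. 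Using $\psi_{i}\chi_{i}=\chi_{i}$ and $(\underline{\Delta}_{\mathbb{M}}-e^{z})(\psi_{i}v)=\psi_{i}(\underline{\Delta}_{\mathbb{B}\text{ or }\mathbb{Y}}-e^{z})v+[\underline{\Delta}_{\mathbb{M}},\psi_{i}]v$ on the supports in question, a direct calculation gives
$$
(\underline{\Delta}_{\mathbb{M}}-e^{z})P(z)=\chi_{1}+\chi_{2}+K_{a}(z)=I+K_{a}(z),
$$
with $K_{a}(z)$ as in \eqref{kdef}.

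Next I would verify the Fredholm properties of $K_{a}(z)$. The commutators $[\underline{\Delta}_{\mathbb{M}},\psi_{i}]$ are first-order differential operators supported in the compact annulus $\{\frac{4}{3}\leq x\leq 2\}$, so $K_{a}(z)$ factors through the inclusion of a first-order Sobolev space on a compact set into $L^{2}$, which is compact by Rellich's lemma; this makes $K_{a}(z)$ a compact operator on $\tilde{e}^{-x^{2}}\otimes L^{2}(\mathbb{M})$ (and likewise between the weighted spaces), for each $z$ away from the poles of $P(z)$. Meromorphy of $K_{a}(z)$ with poles of finite rank is inherited from Theorem \ref{mc} and from the meromorphy of $(\underline{\Delta}_{\mathbb{B}}-e^{z})^{-1}$ on the closed manifold $\mathbb{B}$; the weight factors $\tilde{e}^{\pm x^{2}}$ cause no trouble in the commutator terms because the supports of $[\underline{\Delta}_{\mathbb{M}},\psi_{i}]$ are compact, so the weights are bounded above and below there.

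With these ingredients, the analytic Fredholm theorem applies: since $I+K_{a}(z)$ is invertible for at least one $z$ with $\mathrm{Im}(z)\in(0,2\pi)$ (for example by a Neumann series for $\mathrm{Re}(z)$ very negative, using that the two local resolvents then have small norm on the weighted spaces), $(I+K_{a}(z))^{-1}$ is a meromorphic family on $\mathbb{C}$ with poles of finite rank. The identity
$$
(\underline{\Delta}_{\mathbb{M}}-e^{z})^{-1}=P(z)(I+K_{a}(z))^{-1},
$$
which holds on the initial strip where everything is literally defined, then provides the desired meromorphic continuation of $(\underline{\Delta}_{\mathbb{M}}-e^{z})^{-1}$ to all of $\mathbb{C}$ as a family in $\mathcal{L}(\tilde{e}^{-x^{2}}\otimes L^{2}(\mathbb{M}),\tilde{e}^{x^{2}}\otimes L^{2}(\mathbb{M}))$. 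The possible poles come either from $P(z)$, whose two summands contribute the discrete set on $\cup_{k\in\mathbb{Z}}\{\mathrm{Im}(z)=2k\pi\}$ (from $(\underline{\Delta}_{\mathbb{B}}-e^{z})^{-1}$ and from the part of $(\underline{\Delta}_{\mathbb{Y}}-e^{z})^{-1}$ on $(L^{2}_{\mathcal{H}}(\mathbb{Y}))^{\perp}$) and the set $\mathcal{H}_{a}$ (from the harmonic part on $\mathbb{Y}$), or from $(I+K_{a}(z))^{-1}$; these are exactly the three sets (i)--(iii).

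The main obstacle, as I see it, is the careful tracking of the exponential weights. One must check that each of the four operators $\psi_{i}(\cdots)\chi_{i}$ and $[\underline{\Delta}_{\mathbb{M}},\psi_{i}](\cdots)\chi_{i}$ actually maps $\tilde{e}^{-x^{2}}\otimes L^{2}(\mathbb{M})$ into the right weighted target, given that the $\mathbb{Y}$-resolvent continuation only acts between the weighted spaces by Theorem \ref{mc}. The mismatch is handled because $\chi_{i}$ and the commutators are compactly supported in $x$, so composition with them absorbs or releases the weight without harm. Beyond this, one should verify that the poles remain simple after the Fredholm inversion; this is standard given the simple-pole structure of the two constituent resolvents and the rank-one-type nature of the residues from analytic Fredholm theory, so I would treat it as bookkeeping rather than the conceptual step.
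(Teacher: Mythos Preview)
Your approach is essentially identical to the paper's: the paper builds the same parametrix $Q_{z}=\psi_{1}(\underline{\Delta}_{\mathbb{B}}-e^{z})^{-1}\chi_{1}+\psi_{2}(\underline{\Delta}_{\mathbb{Y}}-e^{z})^{-1}\chi_{2}$, computes $(\underline{\Delta}_{\mathbb{M}}-e^{z})Q_{z}=I+K_{a}(z)$, argues compactness of $K_{a}(z)$ from the compact support and smoothing nature of the commutator terms, and then invokes the meromorphic Fredholm theorem.

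There is, however, one genuine slip in your invertibility step. You propose to run a Neumann series for $\mathrm{Re}(z)$ very negative, claiming the local resolvents then have small norm. This fails: $\mathrm{Re}(z)\to-\infty$ means $e^{z}\to 0$, and $0$ is an eigenvalue of $\underline{\Delta}_{\mathbb{B}}$ (constant function on a closed surface) and lies in $\sigma_{\mathrm{ac}}(\underline{\Delta}_{\mathbb{Y}})$, so both resolvents blow up in that limit. The correct direction, and the one the paper uses, is $\mathrm{Re}(z)\to+\infty$ along $\mathrm{Im}(z)=\pi$, so that $e^{z}\to-\infty$ on the negative real axis; then the sectorial decay of the resolvents (first-order commutators composed with $(\underline{\Delta}-\mu)^{-1}$ give $O(|\mu|^{-1/2})$) forces $\|K_{a}(z)\|\to 0$. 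A second small inaccuracy: $\chi_{2}$ is not compactly supported in $x$ (it equals $1$ for $x\ge\frac{5}{3}$), so your weight argument should rest solely on the compact support of the commutators $[\underline{\Delta}_{\mathbb{M}},\psi_{i}]$, which is what actually kills the $e^{x^{2}}$ growth coming out of the $\mathbb{Y}$-resolvent; this part of your argument is otherwise correct.
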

\begin{proof}

We start by defining a parametrix of $(\underline{\Delta}_{\mathbb{M}}-e^{z})^{-1}$ by
\begin{gather*}
Q_{z}=\psi_{1}(\underline{\Delta}_{\mathbb{B}}-e^{z})^{-1}\chi_{1}+\psi_{2}(\underline{\Delta}_{\mathbb{Y}}-e^{z})^{-1}\chi_{2},
\end{gather*}
which is a meromorphic family over $z\in\mathbb{C}$ with values in $\mathcal{L}(\tilde{e}^{-x^2}\otimes L^{2}(\mathbb{M}),\tilde{e}^{x^2}\otimes L^{2}(\mathbb{M}))$. From the choice of the cut-off functions we have that 
\begin{gather*}\label{parametrix}
(\underline{\Delta}_{\mathbb{M}}-e^{z})Q_{z}=I+K_{a}(z).
\end{gather*}

The support of the kernel of $K_{a}(z)$ is disjoint from the diagonal and is compact in the left variable. Moreover, $K_{a}(z)$ has smooth kernel as a pseudodifferential operator. Thus, $K_{a}(z)$ is a meromorphic family of compact operators, and it is easy to see that its poles are of finite rank. The operators $\chi_{1}$, $\chi_{2}$, $[\underline{\Delta}_{\mathbb{M}},\psi_{1}]$ and $[\underline{\Delta}_{\mathbb{M}},\psi_{2}]$ are of order $\leq1$. Hence, by the standard decay properties of the resolvent of a sectorial operator in the interpolation space (see e.g. \cite[Corollary 2.4]{RS2}), we have that the norm of $K_{a}(z)$ tends to zero as $\mathrm{Re}(z)\rightarrow+\infty$ with $\mathrm{Im}(z)=\pi$. Thus, by the meromorphic Fredholm theorem (see e.g. \cite[XIII.13]{RS}), $(I+K_{a}(z))^{-1}$ is a meromorphic family over $z\in\mathbb{C}$ with values in $\mathcal{L}(\tilde{e}^{-x^2}\otimes L^{2}(\mathbb{M}))$ and with poles of finite rank. 

\end{proof}

Following the ideas in \cite{Mu}, let $\chi\in C^{\infty}(\mathbb{R})$ such that $\chi(x)=0$ if $x<1$ and $\chi(x)=1$ if $x>1+\varepsilon$, for some $\varepsilon>0$. When $z$ lies in the resolvent set of $\underline{\Delta}_{\mathbb{M}}$, i.e. in $0<\mathrm{Im}(z)<2\pi$, the Hankel function of the second kind $H_{\frac{a+1}{2}}^{(2)}((a+x)e^{\frac{z}{2}})$ does not belong to $L^{2}(\mathbb{Y})$. Thus, for any $z\in\mathbb{C}$ we can set 
\begin{gather}\label{geg}
E_{a,z}=\phi_{\chi}-(\underline{\Delta}_{\mathbb{M}}-e^{z})^{-1}(\Delta_{\mathbb{M}}-e^{z})\phi_{\chi},
\end{gather}
where 
\begin{gather*}
\phi_{\chi}=\Bigg \{\begin{array}{ll} 0 & \mbox{in} \quad X \\ \chi(x)(a+x)^{\frac{a+1}{2}}H_{\frac{a+1}{2}}^{(2)}((a+x)e^{\frac{z}{2}}) & \mbox{in} \quad Y \end{array}.
\end{gather*}

Since $(\Delta_{\mathbb{M}}-e^{z})\phi_{\chi}$ is identically zero when $x>1+\varepsilon$, we have that $(\Delta_{\mathbb{M}}-e^{z})\phi_{\chi}$ is smooth and compactly supported on $M$. Thus, $(\underline{\Delta}_{\mathbb{M}}-e^{z})^{-1}(\Delta_{\mathbb{M}}-e^{z})\phi_{\chi}$ is well defined and is not equal to $\phi_{\chi}$ as $\phi_{\chi}\not\in L^{2}(\mathbb{M})$. Hence, $E_{a,z}$ is well defined, is not equal to zero and is smooth on $M$ and meromorphic over $z\in\mathbb{C}$ with poles that coincide with the poles of $(\underline{\Delta}_{\mathbb{M}}-e^{z})^{-1}$. By restricting $\mathrm{Im}(z)\in(0,2\pi)$ we have that $(\Delta_{\mathbb{M}}-e^{z})E_{a,z}=0$, and hence by meromorphicity $(\Delta_{\mathbb{M}}-e^{z})E_{a,z}=0$ for all $z\in\mathbb{C}$, i.e. that $E_{a,z}$ is a generalized eigenfunction of the Laplacian. Moreover, $E_{a,z}$ is independent of the choice of $\varepsilon$. This follows by taking the difference of two versions of $E_{a,z}$ that correspond to two different values of $\varepsilon$ and then use meromorphicity together with the fact that the difference belongs to $L^{2}(\mathbb{M})$. 

Similarly to \cite{HRS}, we can proceed to the Fourier expansion of the generalized eigenfunction on the cusp $Y$. Since the Hankel functions form a fundamental set for the Bessel equation, and since $\varepsilon$ in the construction of $E_{a,z}$ can be arbitrary small, on $Y$ for any $z\in\mathbb{C}$ we have that
\begin{gather}\label{eexp}
E_{a,z}(x,y)=(a+x)^{\frac{a+1}{2}}H_{\frac{a+1}{2}}^{(2)}((a+x)e^{\frac{z}{2}})+C_{a}(z)(a+x)^{\frac{a+1}{2}}H_{\frac{a+1}{2}}^{(1)}((a+x)e^{\frac{z}{2}}) +\Psi_{a,z}(x,y),
\end{gather}
with a meromorphic in $z\in\mathbb{C}$ family $\Psi_{a,z}$ and a meromorphic in $z\in\mathbb{C}$ function $C_{a}(z)$ which is called {\em stationary scattering matrix}. Note that $C_{a}(z)$ and $\Psi_{a,z}$ are determined uniquely by the properties of $E_{a,z}$ and the choice of the fundamental set for the Bessel equation in the expansion \eqref{eexp}. 

When $\mathrm{Im}(z)\in(0,2\pi)$, since $\Psi_{a,z}\in L^{2}(\mathbb{Y})$, the Fourier coefficients of $\Psi_{a,z}$ are $L^{2}([1,\infty),(1+\frac{x}{a})^{-a}dx)$ solutions of (\ref{ee2}) for $n\neq0$. If we change variables by $x=t-a$ and $u(t-a)=t^{\frac{a}{2}}w(t)$ in (\ref{ee2}), then the equation obtains the form of \cite[(9a)]{HRS} (in the simple case of $n=2$ and $p=0$). The asymptotic behavior at infinity of the $L^{2}$-solution of the equation \cite[(9a)]{HRS} is explicitly given in \cite[Theorem 1.2]{HRS}. Hence, for the tail term $\Psi_{a,z}$ we have the following behavior on $Y$ as $x\rightarrow\infty$, namely 
\begin{gather*}
\Psi_{a,z}(x,y)=O(x^{\frac{a}{2}}e^{(-\frac{2\pi}{(a+1)a^{a}}+\varepsilon)(a+x)^{a+1}}), \quad \forall \varepsilon>0.
\end{gather*}
We can then summarize to the following.
\begin{theorem}\label{tb}
There exists a meromorphic in $z\in\mathbb{C}$ family $E_{a,z}$ of smooth functions on $M$ such that $(\Delta_{\mathbb{M}}-e^{z})E_{a,z}=0$ on $M$ for all $z\in\mathbb{C}$. Further, $E_{a,z}$ admits an asymptotic expansion on the cusp $Y$ given by
\[
E_{a,z}(x,y)=(a+x)^{\frac{a+1}{2}}H_{\frac{a+1}{2}}^{(2)}((a+x)e^{\frac{z}{2}})+C_{a}(z)(a+x)^{\frac{a+1}{2}}H_{\frac{a+1}{2}}^{(1)}((a+x)e^{\frac{z}{2}})+\Psi_{a,z}(x,y),
\]
for some meromorphic in $z\in\mathbb{C}$ function $C_{a}(z)$ called the {\em scattering matrix} and a tail term $\Psi_{a,z}(x,y)$ that satisfies 
\[
\Psi_{a,z}(x,y)=O(x^{\frac{a}{2}}e^{(-\frac{2\pi}{(a+1)a^{a}}+\varepsilon)(a+x)^{a+1}}), \quad \forall \varepsilon>0, \quad \mbox{when} \quad \mathrm{Im}(z)\in(0,2\pi).
\]
Moreover, $E_{a,z}$, $C_{a}(z)$ and $\Psi_{a,z}$ are uniquely determined by the above properties. The poles of $E_{a,z}$ are simple and contained in the following three sets:
\begin{itemize}
\item[(i)] A discrete set of points that is contained in $\cup_{k\in\mathbb{Z}}\{z\in\mathbb{C}\, |\, \mathrm{Im}(z)=2k\pi\}$.
\item[(ii)] The set $\mathcal{H}_{a}$ defined in \eqref{B}. 
\item[(iii)] The poles over $z\in\mathbb{C}$ of the family $(I+K_{a}(z))^{-1}$ defined in Theorem \ref{continuation}.
\end{itemize}
\end{theorem}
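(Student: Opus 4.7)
My plan is to combine the construction in \eqref{geg} with the meromorphic continuation from Theorem \ref{continuation} and then read off the cusp expansion via an $L^{2}$ splitting on the physical strip. First, $\phi_{\chi}$ depends holomorphically on $z$ because $H_{\frac{a+1}{2}}^{(2)}$ is entire in its argument, and $(\Delta_{\mathbb{M}}-e^{z})\phi_{\chi}$ is smooth and compactly supported on $M$; hence \eqref{geg} defines $E_{a,z}$ as a meromorphic family of smooth functions on $M$ whose poles are inherited from $(\underline{\Delta}_{\mathbb{M}}-e^{z})^{-1}$ and therefore lie in the three sets of Theorem \ref{continuation}. A direct computation on the strip $\mathrm{Im}(z)\in(0,2\pi)$ shows $(\Delta_{\mathbb{M}}-e^{z})E_{a,z}=0$; the identity persists on all of $\mathbb{C}$ by meromorphic continuation, and elliptic regularity gives smoothness.

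For the expansion, set $R_{z}:=-(\underline{\Delta}_{\mathbb{M}}-e^{z})^{-1}(\Delta_{\mathbb{M}}-e^{z})\phi_{\chi}$; on the strip $\mathrm{Im}(z)\in(0,2\pi)$ one has $R_{z}\in L^{2}(\mathbb{M})$. Since $\phi_{\chi}=(a+x)^{\frac{a+1}{2}}H_{\frac{a+1}{2}}^{(2)}((a+x)e^{\frac{z}{2}})$ for $x\geq 1+\varepsilon$, the difference $E_{a,z}-(a+x)^{\frac{a+1}{2}}H_{\frac{a+1}{2}}^{(2)}((a+x)e^{\frac{z}{2}})$ agrees with $R_{z}$ there. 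Fourier-expanding $R_{z}|_{Y}$ over $S^{1}$, the zero coefficient satisfies the Bessel equation from Section 2 and lies in $L^{2}([1,\infty),(1+\frac{x}{a})^{-a}dx)$; by the large-argument asymptotics of $H_{\nu}^{(1)}$ and $H_{\nu}^{(2)}$ the $H^{(2)}$-solution fails to be $L^{2}$ on this strip, while $(a+x)^{\frac{a+1}{2}}H_{\frac{a+1}{2}}^{(1)}((a+x)e^{\frac{z}{2}})$ is $L^{2}$, so the zero mode of $R_{z}$ must be $C_{a}(z)(a+x)^{\frac{a+1}{2}}H_{\frac{a+1}{2}}^{(1)}((a+x)e^{\frac{z}{2}})$ for a uniquely determined scalar $C_{a}(z)$. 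The non-zero Fourier modes define $\Psi_{a,z}$. Both $C_{a}(z)$ and $\Psi_{a,z}$ inherit meromorphic dependence on $z$ from the resolvent, so the expansion \eqref{eexp} extends to all $z\in\mathbb{C}$.

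For the tail decay, the nonzero Fourier coefficients of $\Psi_{a,z}$ satisfy \eqref{ee2} with $n\neq 0$; the substitution $t=a+x$ and $u(t-a)=t^{\frac{a}{2}}w(t)$ converts each such equation into the form of \cite[(9a)]{HRS}, and the asymptotic of its $L^{2}$-solution given in \cite[Theorem 1.2]{HRS} yields the pointwise bound with exponent $(-\frac{2\pi}{(a+1)a^{a}}+\varepsilon)(a+x)^{a+1}$ on every Fourier coefficient. Summing absolutely in $n$ produces the claimed bound on $\Psi_{a,z}(x,y)$ uniformly in $y\in S^{1}$. Uniqueness follows at once: any two candidates with the asserted asymptotic would differ by a generalized eigenfunction whose $H^{(2)}$-coefficient vanishes and whose remaining terms place the difference in $L^{2}(\mathbb{M})$ on the physical strip; since $e^{z}$ is not a point spectrum value of $\underline{\Delta}_{\mathbb{M}}$ off a discrete set, the difference vanishes there and, by meromorphicity, on all of $\mathbb{C}$. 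The main technical obstacle will be matching \eqref{ee2} with \cite[(9a)]{HRS} under the change of variables and extracting from \cite[Theorem 1.2]{HRS} an estimate uniform in $n$ strong enough to guarantee absolute Fourier summability with the advertised decay rate.
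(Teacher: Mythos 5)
Your proposal is correct and mirrors the paper's own argument: define $E_{a,z}$ via \eqref{geg}, inherit meromorphicity, the eigenfunction property, and the three pole sets from the continued resolvent in Theorem \ref{continuation}, identify the $H^{(1)}$-coefficient on the physical strip from the $L^{2}$-membership of $R_{z}=E_{a,z}-\phi_{\chi}$ together with the large-argument growth of $H^{(2)}$, and treat the nonzero Fourier modes via the change of variables $t=a+x$, $u=t^{a/2}w$ to \cite[(9a)]{HRS}. You correctly flag the uniform-in-$n$ summability needed for the stated tail bound as a point requiring extra care; the paper is equally terse at that step, and your uniqueness argument (difference lies in $L^{2}(\mathbb{M})$ on the physical strip, then propagate by meromorphicity) is exactly the one the paper invokes.
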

We can use the uniqueness from the above theorem in order to prove the functional equation of the scattering matrix as in \cite[Theorem 1.3]{HRS}. More precisely, we have that
\begin{eqnarray*}
\lefteqn{E_{a,z-2\pi i}(x,y)}\\
&=&(a+x)^{\frac{a+1}{2}}H_{\frac{a+1}{2}}^{(2)}((a+x)e^{\frac{z}{2}-\pi i})+C_{a}(z-2\pi i)(a+x)^{\frac{a+1}{2}}H_{\frac{a+1}{2}}^{(1)}((a+x)e^{\frac{z}{2}-\pi i}) +\Psi_{z-2\pi i}(x,y),
\end{eqnarray*}
where by using the properties of the Hankel functions 
\begin{gather*}
H_{\nu}^{(1)}(e^{-i\pi}\tau)=2\cos(\pi\nu)H_{\nu}^{(1)}(\tau)+e^{-i\pi\nu}H_{\nu}^{(2)}(\tau) \quad \mbox{and} \quad H_{\nu}^{(2)}(e^{-i\pi}\tau)=-e^{i\pi\nu}H_{\nu}^{(1)}(\tau), 
\end{gather*}
valid for any $\tau$ on the logarithmic cover of $\mathbb{C}$ and any $\nu\in\mathbb{C}$, we obtain
\begin{eqnarray}\nonumber
\lefteqn{E_{a,z-2\pi i}(x,y)=e^{-i\pi\frac{a+1}{2}}C_{a}(z-2\pi i)(a+x)^{\frac{a+1}{2}}H_{\frac{a+1}{2}}^{(2)}((a+x)e^{\frac{z}{2}})}\\\label{475}
&&+\big(2\cos(\pi\frac{a+1}{2})C_{a}(z-2\pi i)-e^{i\pi\frac{a+1}{2}}\big)(a+x)^{\frac{a+1}{2}}H_{\frac{a+1}{2}}^{(1)}((a+x)e^{\frac{z}{2}})+\Psi_{z-2\pi i}(x,y).
\end{eqnarray}
Moreover,
\begin{eqnarray*}
\lefteqn{e^{-i\pi\frac{a+1}{2}}C_{a}(z-2\pi i)E_{a,z}(x,y)=e^{-i\pi\frac{a+1}{2}}C_{a}(z-2\pi i)(a+x)^{\frac{a+1}{2}}H_{\frac{a+1}{2}}^{(2)}((a+x)e^{\frac{z}{2}})}\\
&&+e^{-i\pi\frac{a+1}{2}}C_{a}(z-2\pi i)C_{a}(z)(a+x)^{\frac{a+1}{2}}H_{\frac{a+1}{2}}^{(1)}((a+x)e^{\frac{z}{2}}) +e^{-i\pi\frac{a+1}{2}}C_{a}(z-2\pi i)\Psi_{a,z}(x,y).
\end{eqnarray*}
By comparing this equation with \eqref{475}, by uniqueness of Theorem \ref{tb}, we find the following. 
\begin{theorem}
The scattering matrix defined in Theorem \ref{tb} satisfies the following functional equation
\[
C_{a}(z)\big(C_{a}(z+2\pi i)+e^{i\pi a}-1\big)=e^{i\pi a}, \quad \forall z\in\mathbb{C}.
\]
\end{theorem}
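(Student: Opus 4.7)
The plan is to carry out exactly the comparison that has been set up immediately before the theorem statement, and then collapse the resulting identity algebraically. Observe first that $e^{z-2\pi i}=e^{z}$, so $E_{a,z-2\pi i}$ is a generalized eigenfunction of $\Delta_{\mathbb{M}}-e^{z}$ on $M$. Applying the two Hankel rotation identities recalled in the excerpt,
$H_{\nu}^{(1)}(e^{-i\pi}\tau)=2\cos(\pi\nu)H_{\nu}^{(1)}(\tau)+e^{-i\pi\nu}H_{\nu}^{(2)}(\tau)$ and
$H_{\nu}^{(2)}(e^{-i\pi}\tau)=-e^{i\pi\nu}H_{\nu}^{(1)}(\tau)$
with $\nu=(a+1)/2$ and $\tau=(a+x)e^{z/2}$, I rewrite the asymptotic expansion of $E_{a,z-2\pi i}$ purely in terms of Hankel functions evaluated at $(a+x)e^{z/2}$. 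This is already done in equation \eqref{475}: the coefficient of $(a+x)^{\frac{a+1}{2}}H^{(2)}_{\frac{a+1}{2}}((a+x)e^{z/2})$ turns out to be $e^{-i\pi\frac{a+1}{2}}C_{a}(z-2\pi i)$, while the coefficient of the $H^{(1)}$-term is $2\cos(\pi\frac{a+1}{2})C_{a}(z-2\pi i)-e^{i\pi\frac{a+1}{2}}$.

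Next, I compare this with the function $e^{-i\pi\frac{a+1}{2}}C_{a}(z-2\pi i)E_{a,z}$, which is also a generalized eigenfunction of $\Delta_{\mathbb{M}}-e^{z}$ and whose expansion on $Y$ has exactly the same leading coefficient $e^{-i\pi\frac{a+1}{2}}C_{a}(z-2\pi i)$ in front of $H^{(2)}_{\frac{a+1}{2}}((a+x)e^{z/2})$. Both tail terms satisfy the decay estimate of Theorem \ref{tb} when $\mathrm{Im}(z)\in(0,2\pi)$. Hence by the uniqueness clause of Theorem \ref{tb} these two functions must coincide (for $\mathrm{Im}(z)\in(0,2\pi)$, and then by meromorphicity for all $z\in\mathbb{C}$). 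Equating the coefficients of $(a+x)^{\frac{a+1}{2}}H^{(1)}_{\frac{a+1}{2}}((a+x)e^{z/2})$ yields
\[
e^{-i\pi\frac{a+1}{2}}C_{a}(z-2\pi i)C_{a}(z)=2\cos\bigl(\pi\tfrac{a+1}{2}\bigr)C_{a}(z-2\pi i)-e^{i\pi\frac{a+1}{2}}.
\]

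To finish, I multiply through by $e^{i\pi\frac{a+1}{2}}$ and use $2\cos(\pi\frac{a+1}{2})e^{i\pi\frac{a+1}{2}}=e^{i\pi(a+1)}+1=1-e^{i\pi a}$ together with $e^{i\pi(a+1)}=-e^{i\pi a}$. This reduces the identity to
\[
C_{a}(z-2\pi i)\bigl(C_{a}(z)+e^{i\pi a}-1\bigr)=e^{i\pi a},
\]
and shifting $z\mapsto z+2\pi i$ gives the claimed functional equation. The only real point of care is the Hankel bookkeeping in the first paragraph and, technically, the justification that the two generalized eigenfunctions being compared really satisfy the hypotheses of the uniqueness statement; the tail term $\Psi_{a,z-2\pi i}$ decays as required once we are in the strip $\mathrm{Im}(z)\in(0,2\pi)$, which is enough since both sides of the final identity are meromorphic in $z$ and the identity then extends globally.
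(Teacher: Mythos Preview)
Your proof is correct and follows exactly the same route as the paper: rewrite the expansion of $E_{a,z-2\pi i}$ via the Hankel rotation identities, compare with the suitably normalized $E_{a,z}$, and invoke the uniqueness clause of Theorem~\ref{tb} to match the $H^{(1)}$-coefficients. You have in fact supplied more detail than the paper does, namely the explicit algebraic reduction from the raw coefficient identity to the stated functional equation and the final shift $z\mapsto z+2\pi i$; the paper leaves this computation to the reader.
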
 
\begin{remark}
In a similar way to \cite[Section 4]{HRS} we can also prove unitarity for $C_{a}(z)$ as in \cite[Theorem 1.3]{HRS}. 
\end{remark}

\section{Approaching a surface with hyperbolic cusp}

In this section we show that our parametric family of surfaces approaches - from scattering point of view and in a certain sense - a surface with hyperbolic cusp. Under this consideration we are able to obtain scattering information for such a surface. By a surface with hyperbolic cusp we mean a two-dimensional Riemannian manifold consisting of a compact surface with boundary $S^{1}$ and a non-compact end equal to $[e,\infty) \times S^{1}$, such that when the Riemannian metric is restricted to the second part it admits the usual hyperbolic form, namely $(dt^{2}+dy^{2})/t^{2}$, $(t,y)\in [e,\infty)\times S^{1}$. Therefore, after changing of variables, we write such a surface as $(M=X\cup_{S^{1}}Y,g_{\mathbb{H}})$, such that when the Riemannian metric $g_{\mathbb{H}}$ is restricted to $Y$ it takes the form $dx^{2}+e^{-2x}dy$, $(x,y)\in [1,\infty)\times S^{1}$. Let $\mathbb{M}_{\mathbb{H}}=(M,g_{\mathbb{H}})$, $\mathbb{X}_{\mathbb{H}}=(X,g_{\mathbb{H}}|_{X})$ and $\mathbb{Y}_{\mathbb{H}}=(Y,g_{\mathbb{H}}|_{Y})$.
 
The hyperbolic Laplacian $\Delta_{\mathbb{H}}$ has a unique closed extension $\underline{\Delta}_{\mathbb{H}}$ in the space $L^{2}(\mathbb{M}_{\mathbb{H}})$ of square integrable functions with respect to the Riemannian measure induced by $g_{\mathbb{H}}$. There exists a unique generalized $s(1-s)$-eigenfunction $F_{\mathbb{H},s}$ of $\Delta_{\mathbb{H}}$ that has similar properties to $E_{a,z}$ as described in Theorem \ref{tb}, and on the cusp $Y$ it admits the following Fourier expansion, namely
\begin{gather}\label{heq}
F_{\mathbb{H},s}(x,y)=e^{xs}+S(s)e^{x(1-s)}+\Psi_{\mathbb{H},s}(x,y), 
\end{gather}
with
$$
 \Psi_{\mathbb{H},s}(x,y)=O(e^{-2\pi e^{x}}) \quad \mbox{when}\quad \mathrm{Re}(s)>\frac{1}{2}, \quad s\notin(\frac{1}{2},1],
$$
and the scattering matrix $S(s)$, $s\in\mathbb{C}$ (see e.g. \cite[(13)]{Mu}). For further details of the related spectral and scattering theory see also \cite{Iw}, \cite{LP}, \cite{Mu}, \cite{Mu2} and \cite{Mu3}. The following result is an immediate consequence of the choice of the metric $g$.

\begin{theorem}
For any $\phi\in C_{0}^{\infty}(Y\backslash\partial Y)$ and $\lambda\in \rho(\underline{\Delta}_{\mathbb{H}})$ we have that
\[
\|(\underline{\Delta}_{\mathbb{H}}-\lambda)^{-1}(\Delta_{\mathbb{M}}-\lambda)\phi-\phi\|_{L^{2}(\mathbb{M}_{\mathbb{H}})}\rightarrow0\quad \mbox{as} \quad a\rightarrow\infty.
\]
\end{theorem}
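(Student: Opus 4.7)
The plan is to reduce the statement to a straightforward pointwise comparison of the two Laplacians on the compact support of $\phi$, exploiting the fact that the resolvent of $\underline{\Delta}_{\mathbb{H}}$ is a bounded operator in $L^{2}(\mathbb{M}_{\mathbb{H}})$ for fixed $\lambda\in\rho(\underline{\Delta}_{\mathbb{H}})$.

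First, I would observe that $\phi\in C_{0}^{\infty}(Y\backslash\partial Y)$ is smooth and compactly supported in the interior of $M$, hence lies in the domain of the unique closed extension $\underline{\Delta}_{\mathbb{H}}$, and on such functions $\underline{\Delta}_{\mathbb{H}}\phi=\Delta_{\mathbb{H}}\phi$ pointwise. Consequently $\phi=(\underline{\Delta}_{\mathbb{H}}-\lambda)^{-1}(\Delta_{\mathbb{H}}-\lambda)\phi$, and the quantity to be estimated becomes
\[
(\underline{\Delta}_{\mathbb{H}}-\lambda)^{-1}(\Delta_{\mathbb{M}}-\lambda)\phi-\phi=(\underline{\Delta}_{\mathbb{H}}-\lambda)^{-1}(\Delta_{\mathbb{M}}-\Delta_{\mathbb{H}})\phi.
\]
Taking $L^{2}(\mathbb{M}_{\mathbb{H}})$ norms and pulling out the bounded resolvent, it suffices to show that $\|(\Delta_{\mathbb{M}}-\Delta_{\mathbb{H}})\phi\|_{L^{2}(\mathbb{M}_{\mathbb{H}})}\to 0$ as $a\to\infty$.

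Next, I would write the difference explicitly on $Y$. Using \eqref{Delta} and the corresponding formula $\Delta_{\mathbb{H}}=\partial_{x}^{2}-\partial_{x}+e^{2x}\partial_{y}^{2}$ for the hyperbolic Laplacian, one finds
\[
(\Delta_{\mathbb{M}}-\Delta_{\mathbb{H}})\phi=\left(1-\frac{1}{1+\tfrac{x}{a}}\right)\partial_{x}\phi+\left((1+\tfrac{x}{a})^{2a}-e^{2x}\right)\partial_{y}^{2}\phi.
\]
Let $K\subset (1,\infty)\times S^{1}$ denote the compact support of $\phi$. On $K$, the coefficient $1-(1+x/a)^{-1}=\frac{x/a}{1+x/a}$ tends to $0$ uniformly, and the standard estimate $(1+x/a)^{a}=e^{x}(1+O(1/a))$ (uniform on compact sets) yields $(1+x/a)^{2a}-e^{2x}\to 0$ uniformly on $K$. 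Since $\partial_{x}\phi$ and $\partial_{y}^{2}\phi$ are bounded, $(\Delta_{\mathbb{M}}-\Delta_{\mathbb{H}})\phi$ tends to zero uniformly on $K$.

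Finally, since the hyperbolic volume density $e^{-x}dx\,dy$ restricted to the fixed compact set $K$ is a fixed finite measure, uniform convergence on $K$ gives the required $L^{2}(\mathbb{M}_{\mathbb{H}})$ convergence, and multiplication by the bounded operator $(\underline{\Delta}_{\mathbb{H}}-\lambda)^{-1}$ completes the proof. There is no genuine obstacle here: the only substantive ingredients are the elementary uniform-on-compacta convergence $(1+x/a)^{-a}\to e^{-x}$ (together with the analogous fact for its derivatives entering the coefficient of $\partial_{y}^{2}$) and the fact that $\phi$ has compact support strictly inside $Y$, which both keeps us safely inside $\mathcal{D}(\underline{\Delta}_{\mathbb{H}})$ and makes the coefficient convergence uniform.
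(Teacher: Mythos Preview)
Your proof is correct and follows exactly the same approach as the paper: rewrite the difference as $(\underline{\Delta}_{\mathbb{H}}-\lambda)^{-1}(\Delta_{\mathbb{M}}-\Delta_{\mathbb{H}})\phi$ and use the uniform convergence of the coefficients of $\Delta_{\mathbb{M}}$ to those of $\Delta_{\mathbb{H}}$ on the compact support of $\phi$. The paper's proof is terser, merely asserting the coefficient convergence, whereas you have written out the two coefficient functions and verified their uniform convergence explicitly.
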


\begin{proof}

We start with
\begin{gather}\label{comparing2}
(\underline{\Delta}_{\mathbb{H}}-\lambda)^{-1}(\Delta_{\mathbb{M}}-\lambda)\phi=\phi+(\underline{\Delta}_{\mathbb{H}}-\lambda)^{-1}(\Delta_{\mathbb{M}}-\Delta_{\mathbb{H}})\phi.
\end{gather}
Since by \eqref{Delta} the coefficients of $\Delta_{\mathbb{M}}$ converge to the coefficients of $\Delta_{\mathbb{H}}$ uniformly on compact sets of the cuspidal part $Y$ when $a\rightarrow\infty$, the $L^{2}(\mathbb{M}_{\mathbb{H}})$-norm of $(\underline{\Delta}_{\mathbb{H}}-\lambda)^{-1}(\Delta_{\mathbb{M}}-\Delta_{\mathbb{H}})\phi$ tends to zero as $a\rightarrow\infty$. Hence, the result follows by \eqref{comparing2}.
\end{proof}

For any $z\in\mathbb{C}$ we write $z=z_{0}+i2k_{z}\pi$, with $\mathrm{Im}(z_{0})\in[-\pi,\pi)$ and $k_{z}\in\mathbb{Z}$. Further, let $\rho:(0,\infty)\rightarrow [1,\infty)$ be any function such that $\rho(a)\rightarrow \infty$ as $a\rightarrow \infty$, and define
\begin{gather*}
l(a)=2^{a}\Gamma(\frac{a+1}{2})\rho(a), \quad a\in(0,\infty),
\end{gather*}
where by $\Gamma$ we denote the gamma function. According to Theorem \ref{tb}, for any $a\notin\{2n+1\, |\, n\in\mathbb{N}\}$ we define the following generalized eigenfunction of $\Delta_{\mathbb{M}}$, namely 
\begin{gather*}
F_{a,z}=\eta_{a}(z)E_{a,z+2\ln(l(a))}
\end{gather*}
where
\begin{eqnarray*}
\lefteqn{\eta_{a}(z)=}\\
&&a^{-\frac{a}{2}}\sqrt{\frac{\pi l(a)e^{\frac{z_{0}}{2}}}{2}}e^{-ial(a)e^{\frac{z_{0}}{2}}}\Big(\frac{\sin(k_{z}\pi\frac{a+1}{2})}{\cos(\frac{\pi a}{2})}e^{i\frac{\pi a}{4}} +iC_{a}(z+2\ln(l(a)))\frac{\sin((k_{z}-1)\pi\frac{a+1}{2})}{\cos(\frac{\pi a}{2})}e^{-i\frac{\pi a}{4}}\Big)^{-1}.
\end{eqnarray*}
The role of $\eta_{a}$ and of the spectral shift $z\mapsto z+2\ln(l(a))$ is to achieve uniform convergence on compact sets of $Y$ of the zero $S^{1}$-Fourier coefficient (except possibly of the scattering matrix) of $F_{a,z}$ to the zero $S^{1}$-Fourier coefficient of $F_{\mathbb{H},s}$ for large values of the spectral parameter as $a\rightarrow\infty$. By recalling that 
$(1+\frac{x}{a})^{a}\rightarrow e^{x}$ uniformly in $x$ lying on a compact subset of $[1,\infty)$ as $a\rightarrow\infty$, and that the non-zero $S^{1}$-Fourier coefficients of $F_{a,z}$ are given according to Theorem \ref{tb} by the tail term $\eta_{a}(z)\Psi_{a,z+2\ln(l(a))}$, we have the following result.

\begin{theorem}\label{tap}
Under the spectral equivalence $s=\frac{1}{2}+il(a)e^{\frac{z_{0}}{2}}$, for any 
\begin{gather*}\label{z}
z\in\mathbb{C}\backslash\bigcup_{k\in\mathbb{Z}}\Big\{\mu\in\mathbb{C}\, |\, \mathrm{Im}(\mu+i2k\pi)=\pi\Big\}
\end{gather*}
we have that
\begin{gather*}
F_{a,z}-\eta_{a}(z)\Psi_{a,z+2\ln(l(a))}=e^{xs}P_{a}(z_{0},x)+\xi_{a}(z)e^{x(1-s)}Q_{a}(z_{0},x)
\end{gather*}
on $Y$, where
$$
P_{a}(z_{0},x)=(1+\frac{x}{a})^{\frac{a}{2}}e^{-\frac{x}{2}}p_{a}(z_{0},x) \quad \text{and} \quad Q_{a}(z_{0},x)=(1+\frac{x}{a})^{\frac{a}{2}}e^{-\frac{x}{2}}q_{a}(z_{0},x)
$$
for some smooth in $x$ and holomorphic in $w_{0}$, $(x,w_{0})\in[1,\infty)\times\{\mu\in\mathbb{C}\, |\, |\mathrm{Im}(\mu)|<\pi\}$, functions $p_{a}(w_{0},x)$, $q_{a}(w_{0},x)$, and 
\begin{gather*}
\xi_{a}(z)=\frac{i\sin((k_{z}+1)\pi\frac{a+1}{2})e^{i\frac{\pi a}{2}}-C_{a}(z+2\ln(l(a)))\sin(k_{z}\pi\frac{a+1}{2})}{\sin(k_{z}\pi\frac{a+1}{2})e^{i\frac{\pi a}{2}} +iC_{a}(z+2\ln(l(a)))\sin((k_{z}-1)\pi\frac{a+1}{2})}e^{-i2al(a)e^{\frac{z_{0}}{2}}},
\end{gather*}
$a\notin\{2n+1\, |\, n\in\mathbb{N}\}$. Furthermore, for any $c\in\mathbb{R}$ and $\varepsilon\in(0,\pi)$ we have that $p_{a}(w_{0},x)\rightarrow 1$ and $q_{a}(w_{0},x)\rightarrow 1$ as $a\rightarrow \infty$ uniformly in $x$ and $w_{0}$ with $x\in[1,\infty)$ and 
\[
w_{0}\in \Big\{\mu\in\mathbb{C}\, |\, \mathrm{Re}(\mu)\geq c\Big\}\cap\Big\{\mu\in\mathbb{C}\, |\, |\mathrm{Im}(\mu)|\leq\pi-\varepsilon\Big\}.
\]
\end{theorem}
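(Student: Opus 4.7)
The plan is to identify the two exponential modes hidden in $F_{a,z}-\eta_a(z)\Psi_{a,z+2\ln(l(a))}$ on $Y$ and then read off $P_a$, $Q_a$, and the coefficient $\xi_a(z)$ by a direct algebraic computation, with a final application of classical Hankel asymptotics for the uniform limit. By Theorem \ref{tb}, on $Y$ the quantity of interest equals $\eta_a(z)(a+x)^{(a+1)/2}\bigl[H^{(2)}_{(a+1)/2}(Te^{ik_z\pi})+C_a(z+2\ln l(a))H^{(1)}_{(a+1)/2}(Te^{ik_z\pi})\bigr]$, where $T=(a+x)e^{z_0/2}l(a)$ and $\nu=(a+1)/2$.

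The first step is to apply the standard continuation formulas $H^{(1)}_\nu(Te^{ik_z\pi})=-\tfrac{\sin((k_z-1)\pi\nu)}{\sin\pi\nu}H^{(1)}_\nu(T)-e^{-i\pi\nu}\tfrac{\sin(k_z\pi\nu)}{\sin\pi\nu}H^{(2)}_\nu(T)$ and its $H^{(2)}$-companion (the case $k_z=-1$ being the one already recalled before \eqref{475}), together with the identities $\sin\pi\nu=\cos(\pi a/2)$ and $e^{\pm i\pi\nu}=\pm i e^{\pm i\pi a/2}$, to rewrite the bracket as $\alpha(z)H^{(1)}_\nu(T)+\beta(z)H^{(2)}_\nu(T)$ with coefficients expressed through $C_a(z+2\ln l(a))$ and the very trigonometric factors $\sin(k_z\pi\tfrac{a+1}{2})$ and $\sin((k_z\pm1)\pi\tfrac{a+1}{2})$ that appear in $\eta_a$ and $\xi_a$.

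Next, define the exact ratios $h^{(j)}(T,\nu):=H^{(j)}_\nu(T)\big/\bigl(\sqrt{2/(\pi T)}\,e^{(-1)^{j+1}i(T-\nu\pi/2-\pi/4)}\bigr)$, which are holomorphic in $w_0$ on $\{|\mathrm{Im}\,w_0|<\pi\}$ since $|\arg T|<\pi/2$ there. The spectral equivalence $s=\tfrac12+il(a)e^{z_0/2}$ yields the identities $e^{iT}=e^{ial(a)e^{z_0/2}}e^{-x/2}e^{xs}$ and $e^{-iT}=e^{-ial(a)e^{z_0/2}}e^{-x/2}e^{x(1-s)}$, so that $H^{(1)}_\nu(T)$ carries the mode $e^{xs}$ and $H^{(2)}_\nu(T)$ the mode $e^{x(1-s)}$. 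Combined with the identity $(a+x)^{(a+1)/2}\sqrt{2/(\pi T)}=a^{a/2}(1+x/a)^{a/2}/\sqrt{\pi l(a)e^{z_0/2}/2}$, the factors $a^{-a/2}\sqrt{\pi l(a)e^{z_0/2}/2}\,e^{-ial(a)e^{z_0/2}}$ in $\eta_a$ cancel against those produced by $(a+x)^{(a+1)/2}\sqrt{2/(\pi T)}$ and $e^{\pm iT}$, leaving $(1+x/a)^{a/2}e^{-x/2}$ times the reciprocal of the trigonometric bracket that appears in $\eta_a^{-1}$. A short manipulation (multiplying numerator and denominator by $e^{i\pi a/4}$) shows that $-i\alpha(z)e^{-i\pi a/4}$ equals that same bracket, reducing the $e^{xs}$-coefficient to $1$, while $i\beta(z)e^{i\pi a/4}e^{-2ial(a)e^{z_0/2}}$ divided by the bracket coincides with $\xi_a(z)$. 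This delivers the exact decomposition with $p_a(z_0,x):=h^{(1)}(T,\nu)$ and $q_a(z_0,x):=h^{(2)}(T,\nu)$, the required smoothness and holomorphy being built-in.

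For the final uniform statement $p_a,q_a\to 1$, I would invoke the Poincar\'e uniform asymptotic expansion $H^{(j)}_\nu(T)=\sqrt{2/(\pi T)}\,e^{(-1)^{j+1}i(T-\nu\pi/2-\pi/4)}\bigl(1+O((\nu^2-\tfrac14)/T)\bigr)$, valid uniformly for $|\arg T|\le\pi-\varepsilon/2$ in the regime $|T|\gg\nu^2$ (see Watson, Chapter VII). On the region $\{\mathrm{Re}\,w_0\ge c\}\cap\{|\mathrm{Im}\,w_0|\le\pi-\varepsilon\}\times[1,\infty)$ one has $|T|\ge(a+x)e^{c/2}l(a)$ and $|\arg T|\le(\pi-\varepsilon)/2$, so $(\nu^2-\tfrac14)/|T|\lesssim a/l(a)\to 0$ because $l(a)=2^a\Gamma((a+1)/2)\rho(a)$ grows super-exponentially in $a$. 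The main technical point, and the one real obstacle, is precisely this uniformity of the Hankel expansion as both the order $\nu=(a+1)/2$ and the argument $T$ grow with $a$; it is supplied by the classical Hankel-series remainder bound in the regime $\nu^2/|T|\to 0$ which holds uniformly on the region in question.
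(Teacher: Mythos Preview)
Your algebraic reduction is exactly the paper's: the continuation formulas from Watson 3.62 to pull the argument back to the principal sector, the regrouping into $\alpha(z)H^{(1)}_\nu(T)+\beta(z)H^{(2)}_\nu(T)$, and the identification of the $e^{xs}$ and $e^{x(1-s)}$ modes via $e^{\pm iT}$ are identical in substance to what the paper does in arriving at its equation \eqref{ef}. So on the decomposition and the formula for $\xi_a(z)$ you are in full agreement with the paper.

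The one point where the paper does more than you is precisely the step you flag as ``the one real obstacle.'' You invoke a uniform remainder $O((\nu^2-\tfrac14)/T)$ for the Hankel expansion in the regime $\nu^2/|T|\to 0$ and cite Watson, Chapter~VII; but the standard Hankel asymptotics there are stated for \emph{fixed} order, and uniformity as $\nu\to\infty$ is not automatic. The paper does not appeal to such a ready-made bound: it uses the Schl\"afli--Sonine integral representation (Watson 6.12~(3)--(4)) together with the exact Taylor identity for $(1+it/(2\lambda))^{\nu-1/2}$ to write $h^{(j)}(T,\nu)=1+Q^{\pm}_{\nu}(T)$ with an explicit integral remainder, and then bounds $|Q^{\pm}_{(a+1)/2}((a+x)l(a)e^{z_0/2})|$ by hand, using the duplication formula for $\Gamma$ to control the resulting $\int_0^\infty e^{-t}t^{a}\,dt$. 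The bound obtained is not $(\nu^2-\tfrac14)/|T|$ but rather of order $1/\rho(a)$, which is all that is needed. Your outline becomes a complete proof if you replace the black-box asymptotic by this integral-representation argument (which is, in fact, the source of the Hankel expansion in Watson~7.2 that you cite).
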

\begin{proof}
By \cite[3.62 (5)-(6)]{Wa} we have that 
\begin{gather*}\label{H1}
H_{\nu}^{(1)}(e^{w})=-\frac{\sin((m-1)\pi \nu)}{\sin(\pi\nu)}H_{\nu}^{(1)}(e^{w_{+}})-e^{-\nu\pi i}\frac{\sin(m\pi \nu)}{\sin(\pi\nu)}H_{\nu}^{(2)}(e^{w_{+}})
\end{gather*}
and
\begin{gather*}\label{H2}
H_{\nu}^{(2)}(e^{w})=e^{\nu\pi i}\frac{\sin(m\pi \nu)}{\sin(\pi\nu)}H_{\nu}^{(1)}(e^{w_{+}})+\frac{\sin((m+1)\pi \nu)}{\sin(\pi\nu)}H_{\nu}^{(2)}(e^{w_{+}}),
\end{gather*}
where $w=w_{+}+im\pi$, with $\mathrm{Im}(w_{+})\in[0,\pi)$, $m\in\mathbb{Z}$ and $\nu>0$. Similarly
\begin{gather*}\label{H1}
H_{\nu}^{(1)}(e^{w})=-\frac{\sin((n-1)\pi \nu)}{\sin(\pi\nu)}H_{\nu}^{(1)}(e^{w_{-}})-e^{-\nu\pi i}\frac{\sin(n\pi \nu)}{\sin(\pi\nu)}H_{\nu}^{(2)}(e^{w_{-}})
\end{gather*}
and
\begin{gather*}\label{H2}
H_{\nu}^{(2)}(e^{w})=e^{\nu\pi i}\frac{\sin(n\pi \nu)}{\sin(\pi\nu)}H_{\nu}^{(1)}(e^{w_{-}})+\frac{\sin((n+1)\pi \nu)}{\sin(\pi\nu)}H_{\nu}^{(2)}(e^{w_{-}}),
\end{gather*}
where $w=w_{-}+in\pi$, with $\mathrm{Im}(w_{-})\in[-\pi,0)$ and $n\in\mathbb{Z}$. Hence, by restricting $\nu>\frac{1}{2}$, from the above formulas, \cite[6.12 (3)-(4)]{Wa} 
and the formula
$$
(1+\frac{it}{2\lambda})^{\nu-\frac{1}{2}}=1+(\frac{1}{2}-\nu)\frac{t}{2i\lambda}\int_{0}^{1}(1-\frac{ty}{2i\lambda})^{\nu-\frac{3}{2}}dy, \quad t\geq0, \quad \lambda\in\mathbb{C}\backslash\{0\}
$$
(see e.g. \cite[7.2]{Wa}), for any $w\in\mathbb{C}$ with $\mathrm{Im}(w_{0})\in(-\pi,\pi)$ we obtain the following integral representations for the Hankel functions
\begin{eqnarray}\nonumber
\lefteqn{H_{\nu}^{(1)}(e^{\frac{w}{2}})=}\\\nonumber
&&-\frac{\sin((k_{w}-1)\pi\nu)}{\sin(\pi\nu)}\sqrt{\frac{2}{\pi e^{\frac{w_{0}}{2}}}}e^{i(e^{\frac{w_{0}}{2}}-\frac{\pi\nu}{2}-\frac{\pi}{4})}(1+Q_{\nu}^{+}(e^{\frac{w_{0}}{2}}))\\\label{H1as}
&&-\frac{\sin(k_{w}\pi\nu)}{\sin(\pi\nu)}\sqrt{\frac{2}{\pi e^{\frac{w_{0}}{2}}}}e^{i(\frac{\pi}{4}-\frac{\pi\nu}{2}-e^{\frac{w_{0}}{2}})}(1+Q_{\nu}^{-}(e^{\frac{w_{0}}{2}}))
\end{eqnarray}
and
\begin{eqnarray}\nonumber
\lefteqn{H_{\nu}^{(2)}(e^{\frac{w}{2}})=}\\\nonumber
&&\frac{\sin(k_{w}\pi\nu)}{\sin(\pi\nu)}\sqrt{\frac{2}{\pi e^{\frac{w_{0}}{2}}}}e^{i(e^{\frac{w_{0}}{2}}+\frac{\pi\nu}{2}-\frac{\pi}{4})}(1+Q_{\nu}^{+}(e^{\frac{w_{0}}{2}}))\\\label{H2as}
&&+\frac{\sin((k_{w}+1)\pi\nu)}{\sin(\pi\nu)}\sqrt{\frac{2}{\pi e^{\frac{w_{0}}{2}}}}e^{i(\frac{\pi\nu}{2}+\frac{\pi}{4}-e^{\frac{w_{0}}{2}})}(1+Q_{\nu}^{-}(e^{\frac{w_{0}}{2}})),
\end{eqnarray}
where
$$
Q_{\nu}^{\pm}(\lambda)=\pm\frac{\frac{1}{2}-\nu}{2 i\lambda\Gamma(\nu+\frac{1}{2})}\int_{0}^{+\infty}e^{-t}t^{\nu+\frac{1}{2}}\big(\int_{0}^{1}(1\mp\frac{ty}{2i\lambda})^{\nu-\frac{3}{2}}dy\big)dt, \quad \lambda\in\mathbb{C},\quad \mathrm{arg}(\lambda)\in(-\frac{\pi}{2},\frac{\pi}{2}).
$$
Therefore, by \eqref{H1as}-\eqref{H2as} and Theorem \ref{tb} we obtain that
\begin{eqnarray*}
\lefteqn{E_{a,z+2\ln(l(a))}-\Psi_{a,z+2\ln(l(a))}}\\
&=& (a+x)^{\frac{a+1}{2}}H_{\frac{a+1}{2}}^{(2)}((a+x)l(a)e^{\frac{z}{2}})+C_{a}(z+2\ln(l(a)))(a+x)^{\frac{a+1}{2}}H_{\frac{a+1}{2}}^{(1)}((a+x)l(a)e^{\frac{z}{2}}) \\
&=&(a+x)^{\frac{a}{2}}\sqrt{\frac{2}{\pi l(a)e^{\frac{z_{0}}{2}}}}\\
&&\times\Big(\frac{\sin(k_{z}\pi\frac{a+1}{2})}{\cos(\frac{\pi a}{2})}e^{i((a+x)l(a)e^{\frac{z_{0}}{2}}+\frac{\pi a}{4})}(1+Q_{\frac{a+1}{2}}^{+}((a+x)l(a)e^{\frac{z_{0}}{2}}))\\
&&+i\frac{\sin((k_{z}+1)\pi\frac{a+1}{2})}{\cos(\frac{\pi a}{2})}e^{-i((a+x)l(a)e^{\frac{z_{0}}{2}}-\frac{\pi a}{4})}(1+Q_{\frac{a+1}{2}}^{-}((a+x)l(a)e^{\frac{z_{0}}{2}}))\Big)\\
&&+C_{a}(z+2\ln(l(a)))(a+x)^{\frac{a}{2}}\sqrt{\frac{2}{\pi l(a)e^{\frac{z_{0}}{2}}}}\\
&&\times\Big(i\frac{\sin((k_{z}-1)\pi\frac{a+1}{2})}{\cos(\frac{\pi a}{2})}e^{i((a+x)l(a)e^{\frac{z_{0}}{2}}-\frac{\pi a}{4})}(1+Q_{\frac{a+1}{2}}^{+}((a+x)l(a)e^{\frac{z_{0}}{2}}))\\
&&-\frac{\sin(k_{z}\pi\frac{a+1}{2})}{\cos(\frac{\pi a}{2})}e^{-i((a+x)l(a)e^{\frac{z_{0}}{2}}+\frac{\pi a}{4})}(1+Q_{\frac{a+1}{2}}^{-}((a+x)l(a)e^{\frac{z_{0}}{2}}))\Big),
\end{eqnarray*}
when $\mathrm{Im}(z_{0})\in(-\pi,\pi)$. By rearranging the above equation we get that 
\begin{eqnarray}\nonumber
\lefteqn{E_{a,z+2\ln(l(a))}-\Psi_{a,z+2\ln(l(a))}}\\\nonumber
&=&(a+x)^{\frac{a}{2}}\sqrt{\frac{2}{\pi l(a)e^{\frac{z_{0}}{2}}}}e^{i(a+x)l(a)e^{\frac{z_{0}}{2}}}(1+Q_{\frac{a+1}{2}}^{+}((a+x)l(a)e^{\frac{z_{0}}{2}}))\\\nonumber
&&\times\Big(\frac{\sin(k_{z}\pi\frac{a+1}{2})}{\cos(\frac{\pi a}{2})}e^{i\frac{\pi a}{4}} +iC_{a}(z+2\ln(l(a)))\frac{\sin((k_{z}-1)\pi\frac{a+1}{2})}{\cos(\frac{\pi a}{2})}e^{-i\frac{\pi a}{4}}\Big)\\\nonumber
&&+(a+x)^{\frac{a}{2}}\sqrt{\frac{2}{\pi l(a)e^{\frac{z_{0}}{2}}}}e^{-i(a+x)l(a)e^{\frac{z_{0}}{2}}}(1+Q_{\frac{a+1}{2}}^{-}((a+x)l(a)e^{\frac{z_{0}}{2}}))\\\label{ef}
&&\times\Big(i\frac{\sin((k_{z}+1)\pi\frac{a+1}{2})}{\cos(\frac{\pi a}{2})}e^{i\frac{\pi a}{4}}-C_{a}(z+2\ln(l(a)))\frac{\sin(k_{z}\pi\frac{a+1}{2})}{\cos(\frac{\pi a}{2})}e^{-i\frac{\pi a}{4}}\Big).
\end{eqnarray}
Concerning the $Q_{\frac{a+1}{2}}^{\pm}((a+x)l(a)e^{\frac{z_{0}}{2}})$ terms, when $a>2$ we estimate
\begin{eqnarray}\nonumber
\lefteqn{|Q_{\frac{a+1}{2}}^{\pm}((a+x)l(a)e^{\frac{z_{0}}{2}})|}\\\nonumber
&\leq&\frac{a}{4(a+x)l(a)\Gamma(\frac{a}{2}+1)|e^{\frac{z_{0}}{2}}|}\int_{0}^{+\infty}e^{-t}t^{\frac{a}{2}+1}(1+\frac{t}{2(a+x)l(a)|e^{\frac{z_{0}}{2}}|})^{\frac{a}{2}-1}dt\\\nonumber
&\leq&\frac{a}{4(a+x)l(a)\Gamma(\frac{a}{2}+1)|e^{\frac{z_{0}}{2}}|}(1+\frac{1}{2(a+x)l(a)|e^{\frac{z_{0}}{2}}|})^{\frac{a}{2}-1}(\int_{0}^{1}e^{-t}t^{\frac{a}{2}+1}dt+\int_{1}^{+\infty}e^{-t}t^{a}dt)\\\nonumber
&\leq&\frac{a}{4(a+x)l(a)\Gamma(\frac{a}{2}+1)|e^{\frac{z_{0}}{2}}|}(1+\frac{1}{2(a+x)l(a)|e^{\frac{z_{0}}{2}}|})^{\frac{a}{2}-1}(\int_{0}^{
+\infty}e^{-t}t^{\frac{a}{2}+1}dt+\int_{0}^{+\infty}e^{-t}t^{a}dt)\\\nonumber
&=&\frac{a(\Gamma(\frac{a}{2}+2)+\Gamma(a+1))}{4(a+x)l(a)\Gamma(\frac{a}{2}+1)|e^{\frac{z_{0}}{2}}|}(1+\frac{1}{2(a+x)l(a)|e^{\frac{z_{0}}{2}}|})^{\frac{a}{2}-1}\\\label{Qterm}
&=&\frac{a(1+\frac{a}{2}+\frac{2^{a}}{\sqrt{\pi}}\Gamma(\frac{a+1}{2}))}{4(a+x)l(a)|e^{\frac{z_{0}}{2}}|}(1+\frac{1}{2(a+x)l(a)|e^{\frac{z_{0}}{2}}|})^{\frac{a}{2}-1},
\end{eqnarray}
where we have used the functional equation of the gamma function and the duplication formula 
$$
\Gamma(a)\Gamma(a+\frac{1}{2})=2^{1-2a}\sqrt{\pi}\Gamma(2a).
$$
Therefore, for any $c\in\mathbb{R}$ and $\varepsilon\in(0,\pi)$ the right hand side of \eqref{Qterm} tends to zero as $a$ tends to infinity uniformly in $x$ and $z_{0}$ with $x\in[1,\infty)$ and 
\begin{gather*}
z_{0}\in \Big\{\mu\in\mathbb{C}\, |\, \mathrm{Re}(\mu)\geq c\Big\}\cap\Big\{\mu\in\mathbb{C}\, |\, |\mathrm{Im}(\mu)|\leq\pi-\varepsilon\Big\}.
\end{gather*}
The result now follows by \eqref{ef} and the choice of the factor $\eta_{a}(z)$.
\end{proof}

\section{The zeros of the Hankel functions for large order}

When the parameter $a$ tends to infinity the metric $g|_{Y}$ converges to the hyperbolic metric having the results of the previous section as a consequence. The spectral equivalence from Theorem \ref{tap} becomes $s=\frac{1}{2}+i(-1)^{k_{\lambda}}e^{\frac{\lambda}{2}}$ under $\lambda=z+2\ln(l(a))$. Therefore, it is of particular interest to study the trajectories of the poles with respect to $\lambda$ of the family 
\begin{gather}
\omega_{a}(\lambda)=\frac{i\sin((k_{\lambda}+1)\pi\frac{a+1}{2})e^{i\frac{\pi a}{2}}-C_{a}(\lambda)\sin(k_{\lambda}\pi\frac{a+1}{2})}{\sin(k_{\lambda}\pi\frac{a+1}{2})e^{i\frac{\pi a}{2}} +iC_{a}(\lambda)\sin((k_{\lambda}-1)\pi\frac{a+1}{2})}e^{(-1)^{1+k_{\lambda}}i2ae^{\frac{\lambda}{2}}}
\end{gather}
as $a\rightarrow \infty$, $a\notin\{2n+1\, |\, n\in\mathbb{N}\}$. If we assume that $a=4n$, $n\in\mathbb{N}$, then by simplifying the above formula, it is sufficient to study the trajectories of the poles with respect to $\lambda$ of the family
\begin{gather}\label{scattapproach}
\phi_{n}(\lambda)=\frac{\sin((k_{\lambda}+1)\frac{\pi}{2})+iC_{4n}(\lambda)\sin(k_{\lambda}\frac{\pi}{2})}{\sin(k_{\lambda}\frac{\pi}{2})+iC_{4n}(\lambda)\sin((k_{\lambda}-1)\frac{\pi}{2})}
\end{gather}
as $n\rightarrow \infty$, $n\in\mathbb{N}$.

Since the poles of $C_{a}(\lambda)$ that lie on $\cup_{k\in\mathbb{Z}}\{\mu\in\mathbb{C}\, |\, \mathrm{Im}(\mu)=2k\pi\}$ remain in this set, according to Theorem \ref{tb}, the interesting case is to study the flow with respect to $a$ of the set $\mathcal{H}_{a}$ defined in \eqref{B} together with the poles of the family $(I+K_{a}(\lambda))^{-1}$. We can use the asymptotic expansion of the Hankel functions of large order to obtain information about the flow of the points in $\mathcal{H}_{a}$. This expansion for the Hankel function of the first kind of integer order is given by \cite[(9.3)]{Ol}, and in \cite[Section 9]{Ol} the following is shown.
\begin{proposition}\label{p1}
The zeros in $w$ of $H_{2n+\frac{1}{2}}^{(1)}((2n+\frac{1}{2}) w)$ in the logarithmic cover of $\mathbb{C}$ when $n\rightarrow\infty$, $n\in \mathbb{N}$, stay arbitrary closed to the bounded curve $\partial{\bf K}$ described in \cite[(4.10)]{Ol} and its conjugate $\overline{\partial{\bf K}}$. 
\end{proposition}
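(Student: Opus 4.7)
The plan is to invoke Olver's uniform asymptotic expansion of the Hankel function of large order stated in \cite[(9.3)]{Ol}. For $\nu=2n+\tfrac{1}{2}$ this expansion represents $H_{\nu}^{(1)}(\nu w)$ as an Airy function of the rescaled Liouville variable $\nu^{2/3}\zeta(w)$ multiplied by a nonvanishing analytic prefactor, with an error term that is uniform for $w$ in subdomains of the logarithmic cover of $\mathbb{C}$ on which the conformal map $\zeta$ is well behaved. The key geometric input from \cite[Section 4]{Ol} is that the curve $\partial\mathbf{K}$ is characterized as the locus of those $w$ for which $\zeta(w)$ is real and positive.

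Since $\mathrm{Ai}$ has zeros only on the negative real axis, the Airy factor $\mathrm{Ai}(-\nu^{2/3}\zeta(w))$ can vanish only when $\zeta(w)$ is real and positive, i.e.\ precisely on $\partial\mathbf{K}$, and along $\partial\mathbf{K}$ these zeros are spaced at distance $O(\nu^{-2/3})$. First I would apply Rouch\'e's theorem on small disks centered at each zero of the Airy factor: combining the uniform smallness of Olver's error term with the nonvanishing of the analytic prefactor shows that on the principal sheet every zero of $H_{2n+1/2}^{(1)}((2n+\tfrac{1}{2})w)$ lies within distance $o(1)$ of $\partial\mathbf{K}$ as $n\to\infty$, and conversely each Airy zero is closely approximated by a Hankel zero.

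To propagate the conclusion to the full logarithmic cover, I would insert the connection formulas \cite[3.62 (5)--(6)]{Wa} already used in the proof of Theorem \ref{tap}. These formulas express $H_{\nu}^{(1)}(e^{im\pi}w)$ as a linear combination, with coefficients bounded in modulus by $1/|\sin(\pi\nu)|$ (harmless for $\nu=2n+\tfrac{1}{2}$), of $H_{\nu}^{(1)}(w)$ and $H_{\nu}^{(2)}(w)$. Since $H_{\nu}^{(2)}$ has its Airy-type zeros on $\overline{\partial\mathbf{K}}$ by Schwarz reflection of Olver's expansion, iterating these connection formulas produces zero trajectories lying arbitrarily close to $\partial\mathbf{K}$ or $\overline{\partial\mathbf{K}}$ on each successive sheet, as claimed.

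The main obstacle is the behavior of the expansion near the turning point $w=1$, where $\zeta(w)=0$ and the underlying conformal map degenerates; this is also precisely where $\partial\mathbf{K}$ meets $\overline{\partial\mathbf{K}}$. However, the Airy function approximation is designed specifically to remain uniformly valid across the turning point, and the required estimates are already assembled in \cite[Section 9]{Ol}. The proof therefore reduces to importing those uniform bounds, running the Rouch\'e argument on the principal sheet, and propagating across sheets via the connection formulas above.
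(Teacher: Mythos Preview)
The paper does not supply a proof of this proposition at all: immediately before the statement it says ``This expansion for the Hankel function of the first kind of integer order is given by \cite[(9.3)]{Ol}, and in \cite[Section 9]{Ol} the following is shown,'' and then states the proposition with no further argument. So there is no in-paper proof to compare against; your sketch is an attempt to reconstruct what Olver does.

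Your reconstruction is broadly the right idea --- Olver's uniform Airy-type expansion plus a Rouch\'e argument is exactly how one locates the large-order zeros --- but one step is looser than it looks. When you pass to other sheets via the connection formulas, you obtain $H_{\nu}^{(1)}(e^{im\pi}w)=\alpha_m H_{\nu}^{(1)}(w)+\beta_m H_{\nu}^{(2)}(w)$ and then assert that the zeros of this linear combination lie near $\partial\mathbf{K}\cup\overline{\partial\mathbf{K}}$. That does not follow from the separate zero loci of $H_{\nu}^{(1)}$ and $H_{\nu}^{(2)}$ alone: zeros of a linear combination can sit far from the zeros of either summand. What actually makes this work is that the combination itself admits an Airy-type uniform expansion (equivalently, one substitutes Olver's expansions for both $H_{\nu}^{(1)}$ and $H_{\nu}^{(2)}$ and analyzes the resulting Airy combination), and the anti-Stokes geometry forcing the zeros onto $\partial\mathbf{K}\cup\overline{\partial\mathbf{K}}$ persists. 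If you want a self-contained argument rather than a citation, that is the point to spell out; otherwise, simply citing \cite[Section 9]{Ol} as the paper does is adequate.
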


\begin{remark}
From \eqref{scattapproach} the poles of $C_{4n}(\lambda)$ that lie in the set $\{\mu\in\mathbb{C}\, |\, |\mathrm{Im}(\mu)|<\pi\}$, i.e. that have $k_{\lambda}=0$, are expected to accumulate to zeros of $S(s)$. Hence, by Proposition \ref{p1} and the fact that the zeros of the Hankel function of the second kind are the complex conjugates of the zeros of the Hankel function of the first kind, the poles of $C_{4n}(\lambda)$ that are contained in the sets described by (i) and (ii) of Theorem \ref{tb}
are expected to accumulate to zeros of $S(s)$ in the axis $\frac{1}{2}+i\mathbb{R}$ and in the set $\frac{1}{2}+\frac{i}{2}(\partial{\bf K}\cup\overline{\partial{\bf K}})$. However, the behavior of the poles of the family $(I+K_{a}(\lambda))^{-1}$ when $a$ tends to infinity is not clear. 
\end{remark}

\subsection*{Acknowledgment} We thank the referee for giving helpful suggestions that improved the document.

\end{document}